\newtheorem{lem}{Lemma}[section]
\newtheorem{thm}[lem]{Theorem}
\newtheorem{claim}{\indent Claim}[lem]
\newtheorem{definition}[lem]{Definition}
\begin{document}
\title{Extremal graphs for the suspension of edge-critical graphs\thanks{ Research partially supported by NSFC (Grant No. 12071077,  12001106) and National Natural Science Foundation of Fujian Province (Grant No. 2021J05128).}}
\author{Jianfeng Hou, Heng Li, Qinghou Zeng\\
{\small Center for Discrete Mathematics, Fuzhou University, Fujian, 350003, China}}

\date{}

\maketitle

\begin{abstract}
The Tur\'{a}n number of a graph $H$, $\text{ex}(n,H)$, is the maximum number of edges in an $n$-vertex graph that does not contain $H$ as a subgraph. For a vertex $v$ and a multi-set $\mathcal{F}$ of graphs, the suspension $\mathcal{F}+v$ of $\mathcal{F}$ is the graph obtained by connecting the vertex $v$ to all vertices of $F$ for each $F\in \mathcal{F}$. For two integers $k\ge1$ and $r\ge2$, let $H_i$ be a graph containing a critical edge with chromatic number $r$ for any $i\in\{1,\ldots,k\}$, and let $H=\{H_1,\ldots,H_k\}+v$. In this paper, we determine $\text{ex}(n, H)$ and characterize all the extremal graphs for sufficiently large $n$. This generalizes a result of Chen, Gould, Pfender and Wei on intersecting cliques. We also obtain a stability theorem for $H$, extending a result of Roberts and Scott on graphs containing a critical edge.
\end{abstract}

\textbf{Keywords:} Tur\'{a}n number, extremal graph, edge-critical graph, $r$-partite

\section{Introduction}\label{Intro}
Given a graph $H$, a graph $G$ is called $H$-\emph{free} if it contains no copy of $H$ as a subgraph. The \emph{Tur\'{a}n number} ${\rm ex}(n, H)$ of $H$ is the maximum number of edges in an $H$-free graph on $n$ vertices. Determining ${\rm ex}(n, H)$ is one of most important problem in extremal graph theory and the \emph{Tur\'{a}n graph} plays a key role. For two integers $n$ and $r$ with $n\ge r\ge2$,  the Tur\'{a}n graph $T_r(n)$ is an $n$-vertex complete $r$-partite graph with parts of size $\lceil n/r\rceil$ or $\lfloor n/r\rfloor$. Let $t_r(n)$ denote the number of edges in $T_r(n)$. 
The classical  Tur\'{a}n's Theorem \cite{turan1941}  shows that ${\rm ex}(n, K_{r+1})=t_r(n)=(1-\frac{1}{r}+o(1)){n \choose 2}$ and  the only extremal graph is $T_r(n)$.

Let $\chi(H)$ denote the chromatic number of $H$. If there is an edge $e$ of $H$ such that $\chi(H-e)=\chi(H)-1$, then we say that $H$ is \emph{edge-critical} and $e$ is a \emph{critical edge}. The celebrated Erd\H{o}s-Stone-Simonovits Theorem \cite{ES1966,Stone1946} states that ${\rm ex}(n, H)=\left(1-\frac{1}{\chi(H)-1}+o(1)\right){n \choose 2}$. For an edge-critical graph $H$ with $\chi(H)=r+1$, Simonovits \cite{Simonovits1974} proved that $T_r(n)$ is also the unique extremal graph for sufficiently large $n$.
\begin{thm}[Simonovits \cite{Simonovits1974}]\label{Simonovits}
Let $H$ be an edge-critical graph with $\chi(H)=r+1\geq 3$. Then there exists some $n_0$ such that ${\rm ex}(n, H) = t_r(n)$ for all $n \geq n_0$, and the unique extremal graph is $T_r(n)$.
\end{thm}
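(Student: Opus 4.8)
The lower bound is immediate: since $\chi(T_r(n))=r<r+1=\chi(H)$, the $r$-colourable graph $T_r(n)$ cannot contain $H$, so $T_r(n)$ is $H$-free and ${\rm ex}(n,H)\ge t_r(n)$. For the matching upper bound and the uniqueness statement together, the plan is to prove, by induction on $n$, that for all sufficiently large $n$ every $H$-free graph $G$ on $n$ vertices with $e(G)\ge t_r(n)$ equals $T_r(n)$. The three ingredients I would use are: (i) a minimum-degree bound obtained by deleting a vertex and invoking the induction hypothesis; (ii) the Erd\H{o}s--Stone--Simonovits theorem together with the Erd\H{o}s--Simonovits stability theorem, to pin down the rough structure of $G$; and (iii) an embedding argument that exploits the critical edge of $H$ to forbid ``extra'' edges.

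First I would record how edge-criticality enters. Fix a critical edge $e=xy$ of $H$ and a proper $r$-colouring $c$ of $H-e$. In \emph{any} proper $r$-colouring of $H-e$ the vertices $x$ and $y$ must receive the same colour (otherwise it would properly $r$-colour $H$, contradicting $\chi(H)=r+1$); say $x,y$ lie in the colour class $C_1$. Then $C_1$ is independent in $H-e$, so $xy$ is the only edge of $H$ inside $C_1$, the remaining classes $C_2,\dots,C_r$ are independent in $H$, $H-y$ is $r$-colourable (equivalently $H$ has a proper $(r{+}1)$-colouring with the singleton class $\{y\}$), and no neighbour of $y$ other than $x$ lies in $C_1$ under $c$. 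Next, the minimum degree: if some $v$ had $\deg_G(v)<n-\lceil n/r\rceil=t_r(n)-t_r(n-1)$, then $G-v$ would be $H$-free on $n-1$ vertices with more than $t_r(n-1)$ edges, contradicting ${\rm ex}(n-1,H)=t_r(n-1)$; hence $\delta(G)\ge n-\lceil n/r\rceil$.

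If equality is attained at a vertex $v$, then $e(G-v)\ge t_r(n-1)$ and the induction hypothesis forces $G-v=T_r(n-1)$ (so $e(G)=t_r(n)$). Write $P_1,\dots,P_r$ for the parts of $T_r(n-1)$ and $N_i=N_G(v)\cap P_i$. If $|N_i|\ge |V(H)|$ for every $i$, one embeds $H$ by sending $y\mapsto v$ and $H-y$ into the complete $r$-partite graph $T_r(n-1)$, using $c$ so that $x$ lands in $N_1$ and the (at most $|V(H)|$) neighbours of $y$ coloured $j$ land in $N_j$; every required edge is then an edge of $T_r(n-1)$ or an edge from $v$ into some $N_j$, a contradiction. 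Hence $|N_{i_0}|<|V(H)|$ for some $i_0$, and $i_0$ is unique (two such parts would force $\deg_G(v)\le 2|V(H)|+n-1-2\lfloor(n-1)/r\rfloor<n-\lceil n/r\rceil$ for large $n$); running the same embedding with $x\mapsto$ a vertex of $N_{i_0}$, the class of $x$ placed in $P_{i_0}$ and the other classes in the remaining parts (each carrying $\ge|V(H)|$ neighbours of $v$), shows $N_{i_0}=\emptyset$. Then $v$ misses all of $P_{i_0}$, and comparing $\deg_G(v)=n-\lceil n/r\rceil$ with the part sizes (using the identity $\lceil n/r\rceil-1=\lfloor(n-1)/r\rfloor$) forces $v$ to be adjacent to everything outside $P_{i_0}$; adding $v$ to $P_{i_0}$ gives $G=T_r(n)$.

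It remains to treat $\delta(G)>n-\lceil n/r\rceil$ (in particular the possibility $e(G)>t_r(n)$). Here I would use Erd\H{o}s--Stone--Simonovits, which gives $e(G)=t_r(n)+o(n^2)$, together with stability: there is a partition $V(G)=V_1\cup\cdots\cup V_r$, chosen to maximise the number of crossing edges, with $|V_i|=n/r+o(n)$, only $o(n)$ ``bad'' vertices (those having more than $\zeta n$ neighbours inside their own part), and every non-bad $v\in V_i$ adjacent to all but $O(\zeta n)$ vertices of each $V_j$, $j\ne i$. The same critical-edge embedding as above, run now inside this near-partition, shows that an edge with both endpoints non-bad and in one part would create a copy of $H$; hence $G$ with the bad vertices removed is $r$-partite. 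The delicate final step --- and the one I expect to be the main obstacle --- is the passage from this approximate conclusion to the exact one: one must show the bad set is in fact empty (equivalently, the near-partition has no internal edges and the parts already have the sizes of $T_r(n)$), by a more careful iteration of cleaning and embedding combined with the bounds $e(G)\ge t_r(n)$ and $\delta(G)\ge n-\lceil n/r\rceil$ --- this is precisely where Simonovits' ``progressive induction'' does the work. Once $G$ is known to be $r$-partite with at least $t_r(n)$ edges, $G=T_r(n)$ follows at once.
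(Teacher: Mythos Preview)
The paper does not prove Theorem~\ref{Simonovits}; it is stated as a classical result of Simonovits~\cite{Simonovits1974} and used throughout as a black box (for instance in the proof of Theorem~\ref{th1weaker}, to guarantee that $H_j$ embeds in $T_{r-1}((r-1)p_j)+e$). There is therefore no ``paper's own proof'' to compare your proposal against.

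As for your sketch itself: the outline is the standard modern approach and is broadly sound, but two points deserve flagging. First, the induction-on-$n$ framing needs a genuine base case; your step ``$e(G-v)>t_r(n-1)$ contradicts ${\rm ex}(n-1,H)=t_r(n-1)$'' presupposes the theorem at $n-1$, so you must first establish the conclusion for \emph{some} large $n_0$ by the stability/embedding route before the induction can start. Second, you correctly identify the real work as the case $\delta(G)>n-\lceil n/r\rceil$, and you defer this to ``Simonovits' progressive induction'' without carrying it out. That is honest, but it means the proposal is a plan rather than a proof: the crux --- eliminating the bad vertices and internal edges exactly, not just approximately --- is precisely the nontrivial content of Simonovits' argument, and your sketch does not supply it. A cleaner way to close the argument (avoiding progressive induction) is to run the stability-plus-embedding step once to get $G$ $r$-partite up to $o(n)$ bad vertices, then show directly that a single bad vertex forces a copy of $H$ via the critical-edge embedding combined with the minimum-degree bound; this is essentially what Roberts and Scott do in the reference the paper cites as~\cite{robetscott2018}.
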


Although the Tur\'an numbers of non-bipartite graphs are asymptotically determined by Erd\H{o}s-Stone-Simonovits theorem, it is still a challenge to determine the exact Tur\'an numbers for many non-bipartite graphs. There are only a few graphs whose Tur\'an numbers are determined exactly, including edge-critical graphs and some other specific graphs (e.g. see \cite{FUREDI2015, He-Ma-Yang2021, LAN2019, llp2013, yuan2021, yuan2022, yuanlong2022, yuanzhang2022}). Among all the existing results, the Tur\'an number of the graph consisting of some specific graphs that intersect in exactly one common vertex is widely studied (e.g. see \cite{DKLNTW2022, Glebov, Liu2013, Hou2018, yuan2018}).

In this paper, we mainly consider edge-critical graphs intersecting in a special vertex. For a vertex $v$ and a multi-set $\mathcal{F}$ of graphs, the suspension $\mathcal{F}+v$ of $\mathcal{F}$ is the graph obtained by connecting the vertex $v$ to all vertices of $F$ for each $F\in \mathcal{F}$.  If $\mathscr{F}=\{F\}$, then we simply write $F+v$ instead of $\mathcal{F}+v$. We call the vertex $v$ the  \emph{center vertex} of $\mathcal{F}+v$. If $\mathcal{F}$ is a multi-set consisting of $k$ copies of $K_{r-1}$, then the graph $\mathcal{F}+v$ is known as a $(k, r)$-\emph{fan}, denoted by $F_{k, r}$.  Erd\H{o}s, F\"{u}redi, Gould and Gunderson \cite{Erdosfuredi1995} first considered the Tur\'{a}n number of $F_{k,3}$ (also known as the friendship graph), and established the following result.
\begin{thm}[Erd\H{o}s, F\"{u}redi, Gould and Gunderson \cite{Erdosfuredi1995}]\label{ooo}
For every $k\geq 1$, and for every $n\geq 50k^2$,
\[
{\rm ex}(n, F_{k,3})=\left\lfloor \frac{n^2}{4} \right\rfloor+
\begin{cases}
 k^2-k      & \text {if $k$ is odd}, \\
 k^2-\frac{3}{2}k  & \text {if $k$ is even}.
\end{cases}
\]
\end{thm}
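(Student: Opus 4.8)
The plan is to prove matching lower and upper bounds. Throughout set $c_k := k^2-k$ if $k$ is odd and $c_k := k^2-\tfrac{3}{2}k$ if $k$ is even; a short extremal-graph lemma shows that $c_k$ is exactly the maximum number of edges in a graph having maximum degree at most $k-1$ and matching number at most $k-1$, the extremal graph being $2K_k$ when $k$ is odd and a graph of maximum degree $k-1$ on $2k-1$ vertices when $k$ is even. For the lower bound I would let $G_0$ consist of a balanced complete bipartite graph with parts $A,B$ together with a copy $L$ of the above extremal graph planted inside $A$. If $F_{k,3}\subseteq G_0$ with centre $v$ and triangles $T_1,\dots,T_k$ pairwise meeting only in $v$, then either $v\in B$, in which case each $T_i$ contributes an edge of $L$ and these form a matching of size $k$, contradicting $\nu(L)\le k-1$; or $v\in A$, in which case each $T_i\setminus\{v\}$ contains a neighbour of $v$ in $L$ and these are distinct, forcing $\deg_L(v)\ge k$, contradicting $\Delta(L)\le k-1$. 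Hence $G_0$ is $F_{k,3}$-free and $e(G_0)=t_2(n)+c_k=\lfloor n^2/4\rfloor+c_k$.

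For the upper bound let $G$ be an $F_{k,3}$-free graph on $n$ vertices, $n$ large, and assume $e(G)\ge\lfloor n^2/4\rfloor$. The basic local fact is that $\nu(G[N(v)])\le k-1$ for every vertex $v$, since a matching of size $k$ in $N(v)$ gives $F_{k,3}$ centred at $v$; by the Erd\H{o}s--Gallai theorem this yields $e(G[N(v)])\le (k-1)|N(v)|+O(k^2)$, so $G$ has only $O(kn^2)=o(n^3)$ triangles. I would first delete the (few) vertices of degree below $n/3$ — a legitimate reduction, since removing such a vertex decreases $e(G)$ by less than $\lfloor n^2/4\rfloor-\lfloor (n-1)^2/4\rfloor$ — and then invoke a standard stability argument (comparing the triangle count with a Rademacher-type supersaturation estimate) to conclude that $G$ is within $o(n^2)$ edges of bipartite. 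Fix a bipartition $V(G)=A\cup B$ minimizing the number of edges inside the parts, ties broken to maximize the minimum degree across the cut; an exchange argument together with $F_{k,3}$-freeness then forces every vertex to have at most $O(k^2)$ non-neighbours on the opposite side.

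I would then squeeze out the remaining structure using $F_{k,3}$-freeness once more. If a vertex $v\in A$ had $k$ neighbours inside $A$, attaching to each a private vertex of $B$ (possible since cross-degrees are near-complete and $B$ is large) produces $F_{k,3}$, so $\Delta(G[A])\le k-1$; symmetrically $\Delta(G[B])\le k-1$. A matching of size $k$ inside $A$, together with a common neighbour in $B$, likewise gives $F_{k,3}$, so $\nu(G[A]),\nu(G[B])\le k-1$; combined with the previous paragraph this bounds $e(G[A]),e(G[B])\le c_k$ and makes the sets $A',B'$ of non-isolated vertices of $G[A],G[B]$ of bounded size. Writing $m$ for the number of non-edges between $A$ and $B$, we have $e(G)=|A||B|-m+e(G[A])+e(G[B])\le\lfloor n^2/4\rfloor-m+e(G[A])+e(G[B])$, so it suffices to prove $e(G[A])+e(G[B])\le c_k+m$. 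This is immediate if $A'$ or $B'$ is empty. Otherwise, an edge inside $B$ is expensive: a vertex $z$ lying on such an edge that had no non-neighbour in $A'$ would be adjacent to all of $A'$, and then the $\nu(G[A'])$ triangles through $z$ given by a maximum matching of $G[A']$, together with one further triangle through $z$ using its $B$-edge and a fresh vertex of $A\setminus A'$, already form $F_{k,3}$. Turning this observation into a quantitative charging argument gives $e(G[A])+e(G[B])\le c_k+m$, and hence $e(G)\le\lfloor n^2/4\rfloor+c_k$.

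The hardest part will be this final step. The crude bound $e(G[A])+e(G[B])\le 2c_k$ overshoots by a factor of two, and recovering the exact constant $c_k$ needs both the precise extremal analysis of graphs with $\Delta\le k-1$ and $\nu\le k-1$ — which is where the parity of $k$ enters — and the careful argument that near-extremal within-part structure present in both parts must be paid for by a deficit of cross-edges. A secondary technical point is making the stability step quantitative enough, with explicitly controlled error terms, to obtain the result in the stated range $n\ge 50k^2$ rather than merely for $n$ sufficiently large.
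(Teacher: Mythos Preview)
This theorem is not proved in the paper; it is quoted from \cite{Erdosfuredi1995} as background. The paper's own contribution, Theorem~\ref{th1}, does recover the statement as the special case $r=2$, $H_1=\cdots=H_k=K_2$, but only for sufficiently large $n$, not for the explicit range $n\ge 50k^2$. The paper's route in that generality is: reduce to large minimum degree, show (Theorem~\ref{th1weaker}) that the graph then admits a $k$-\emph{good} partition in the sense of Definition~\ref{defgood}, and invoke Lemma~\ref{main3} of Chen--Gould--Pfender--Wei to conclude $e(G)\le |V_1||V_2|+f(k-1,k-1)$.

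Your plan is closer to the original Erd\H{o}s--F\"uredi--Gould--Gunderson argument and is sound through the lower bound, the stability step, and the deductions $\Delta(G[A]),\Delta(G[B])\le k-1$ and $\nu(G[A]),\nu(G[B])\le k-1$. The genuine gap is in the final ``charging'' step. The observation you sketch --- that a vertex $z\in B'$ with no non-neighbour in $A'$ would see the $\nu(G[A'])$ matching triangles plus one more triangle through its $B$-edge --- only produces an $F_{k,3}$ when $\nu(G[A'])=k-1$; for smaller $\nu(G[A'])$ it yields nothing, and even when $\nu(G[A'])=k-1$ it only gives $m\ge |B'|$, which is of order $k$ rather than the $k^2$ one needs when both $e(G[A])$ and $e(G[B])$ are close to $c_k$. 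What is missing is exactly the content of property~(iii) in Definition~\ref{defgood}: for every $u\in A$ one has $d_A(u)+\nu(G[N(u)\cap B])\le k-1$, a \emph{coupled} bound linking a vertex's in-part degree to the matching structure it sees in the other part. It is this coupling, not the separate $\Delta$ and $\nu$ bounds, that drives Lemma~\ref{main3} (and the original EFGG argument) down to the exact constant $c_k$; without establishing and exploiting it your bound stalls at $e(G[A])+e(G[B])\le 2c_k$. You are right that sharpening ``sufficiently large $n$'' to $n\ge 50k^2$ is a separate, purely quantitative issue --- the paper's machinery, resting on Lemma~\ref{Scott}, does not attempt it.
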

For general $r$,  Chen, Gould, Pfender and Wei \cite{Chengould2003} determined ${\rm ex}(n, F_{k, r})$ for sufficiently large $n$.
\begin{thm}[Chen, Gould, Pfender and Wei \cite{Chengould2003}]\label{111}
For every $k\geq 1$ and $r\geq 2$, and for every $n\geq 16k^3r^8$,
\[
{\rm ex}(n, F_{k, r})=t_{r-1}(n)+
\begin{cases}
 k^2-k      & \text {if $k$ is odd}, \\
 k^2-\frac{3}{2}k  & \text {if $k$ is even}.
\end{cases}
\]
\end{thm}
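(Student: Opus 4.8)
The plan is to prove matching bounds, the additive constant being explained by a classical theorem of Chv\'atal and Hanson: for $s,d\ge 1$, the maximum number of edges in a graph with matching number at most $s$ and maximum degree at most $d$ is a finite number $f(s,d)$, and a short computation from their formula gives $f(k-1,k-1)=k^2-k$ when $k$ is odd (realised e.g.\ by two disjoint copies of $K_k$) and $f(k-1,k-1)=k^2-\tfrac32 k$ when $k$ is even. For the lower bound I would take $G^{\ast}$ to be $T_{r-1}(n)$ with a fixed $f(k-1,k-1)$-edge graph $G_0$ satisfying $\nu(G_0)\le k-1$ and $\Delta(G_0)\le k-1$ planted inside one part $V_1$; this fits since $n$ is large, and $e(G^{\ast})=t_{r-1}(n)+f(k-1,k-1)$. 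To check $G^{\ast}$ is $F_{k,r}$-free, suppose it contains a copy with centre $u$ and $k$ pairwise-disjoint blades, each a $K_{r-1}$ inside $N(u)$. If $u\notin V_1$, then $N(u)$ consists of $r-2$ parts of $T_{r-1}(n)$ (one of them $V_1$); since a $K_{r-1}$ cannot lie in $r-2$ independent sets, two vertices of each blade must be joined inside $V_1$ by an edge of $G_0$, and disjointness forces a matching of size $k$ in $G_0$. If $u\in V_1$, then $N(u)$ consists of $r-2$ independent parts together with $N_{G_0}(u)\subseteq V_1$, so each blade must use a vertex of $N_{G_0}(u)$, forcing $\deg_{G_0}(u)\ge k$. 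Both are impossible.

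For the upper bound, let $G$ be an extremal $F_{k,r}$-free graph on $n$ vertices with $n$ large, so $e(G)\ge t_{r-1}(n)+f(k-1,k-1)$. Since $\chi(F_{k,r})=r$, I would first apply the Erd\H{o}s--Stone--Simonovits theorem together with the Erd\H{o}s--Simonovits stability theorem to delete $o(n^2)$ edges and make $G$ into an $(r-1)$-partite graph, then fix a partition $V(G)=U_1\cup\cdots\cup U_{r-1}$ maximising the number of crossing edges. Minimality gives $\deg_{U_i}(v)\le\deg_{U_j}(v)$ for all $v\in U_i$ and $j\ne i$, from which one deduces that the parts are balanced to within $o(n)$ and that all but $o(n)$ vertices are adjacent to almost all of every other part. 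The next task is to control the internal graphs $W_i:=G[U_i]$: using $F_{k,r}$-freeness together with this near-complete-multipartite cross structure, a vertex with $k$ internal neighbours (in a typical position) could be made the centre of an $F_{k,r}$ by building each blade from one internal neighbour greedily extended, through the dense cross-edges, to a $K_{r-1}$; and a matching of size $k$ inside $\bigcup_{j\ne c}W_j$ likewise produces an $F_{k,r}$ with centre in $U_c$. Hence $\Delta(W_i)\le k-1$ for every $i$ and $\nu\bigl(\bigcup_{j\ne c}W_j\bigr)\le k-1$ for every $c$, and feeding these constraints into the Chv\'atal--Hanson bound (an elementary exchange argument shows the single-part configuration is optimal) gives $\sum_i e(W_i)\le f(k-1,k-1)$. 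Since the crossing edges number at most $t_{r-1}(n)$, this yields $e(G)\le t_{r-1}(n)+f(k-1,k-1)$, and an equality analysis pins down the extremal graphs.

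The step I expect to be the main obstacle is upgrading the qualitative stability picture to the exact statement, i.e.\ controlling the exceptional vertices: one must show that the set of vertices with many cross-non-neighbours (and those sitting on internal edges in the ``wrong'' way) has size bounded independently of $n$, and that such vertices cannot help $G$ exceed a balanced complete $(r-1)$-partite graph plus $f(k-1,k-1)$ edges. This calls for a careful cleaning/absorption argument exploiting that even one such vertex, sitting on top of the bulk complete-multipartite structure, already carries most of a potential $F_{k,r}$; together with forcing the parts to be exactly balanced, this is where essentially all of the difficulty --- and the explicit threshold $n\ge 16k^3r^8$ --- resides, the greedy clique-extension counting and the finite Chv\'atal--Hanson optimisation being routine once $n$ is large. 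The same strategy, with $K_{r-1}$ replaced by an edge-critical graph $H_i$ (using that in a proper $(\chi(H_i)-1)$-colouring of $H_i$ minus a critical edge the colour class containing both ends of that edge induces exactly one edge of $H_i$, so that $H_i$ plays the role of a blade against a complete multipartite background plus a single extra edge), is what one would adapt to obtain the general suspension result.
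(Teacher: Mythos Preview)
The paper does not prove Theorem~\ref{111}; it is quoted from Chen, Gould, Pfender and Wei \cite{Chengould2003} as background, and the paper's own contribution is the generalisation Theorem~\ref{th1} (arbitrary edge-critical blades of chromatic number $r$ in place of $K_{r-1}$). So there is no proof of this particular statement in the paper to compare against.

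That said, your outline matches in spirit the route the paper takes for Theorem~\ref{th1}, with two substantive differences worth noting. First, instead of working directly from Erd\H{o}s--Simonovits stability and then ``cleaning'' exceptional vertices, the paper begins with a minimum-degree reduction: iteratively delete vertices of degree below $\tfrac{r-1}{r}n-k$, check the deleted total cannot push $e(G)$ above the target, and then prove (Theorem~\ref{th1weaker}) that an $H$-free graph with this minimum degree admits a \emph{$k$-good partition} (Definition~\ref{defgood}). Inside that proof the finer Roberts--Scott stability (Lemma~\ref{Scott}) is used to get $O(n^{1/2})$ internal edges from an $O(n)$ deficit, and the leftover vertices are absorbed via two explicit claims (Claims~\ref{|S0*|} and~\ref{j(v)}) bounding the number of ``bad'' vertices and showing each has a unique home part. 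Second, the step you describe as ``an elementary exchange argument shows the single-part configuration is optimal'' is exactly Lemma~\ref{main3}, which the paper imports wholesale from \cite{Chengould2003}; it is the combinatorial heart of the upper bound and is not quite as routine as you suggest, since one must rule out configurations spread over several parts and handle the small-$k$ equality cases separately.

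Your proposal correctly identifies the hard step (upgrading stability to an exact statement and controlling exceptional vertices) but does not carry it out; the paper's minimum-degree reduction plus the $k$-good partition machinery is precisely what fills that gap. As written, your sketch would not recover the explicit threshold $n\ge 16k^3r^8$ either --- that comes from the quantitative bookkeeping in \cite{Chengould2003}, and the present paper, working in greater generality, only obtains ``for sufficiently large $n$''.
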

We further extend this result and determine ${\rm ex}(n ,H)$ for $H:= \mathcal{F}+v$, where $\mathcal{F}$ consists of $k$ edge-critical graphs $H_1, H_2, \ldots, H_k$ with $\chi(H_i)= r$ for each $1\le i\le k$. Let $\mathcal{G}_{n, k ,r}$ be a family of graphs, each of which is obtained from Tur\'{a}n graph $T_r(n)$ by embedding two vertex disjoint copies of $K_k$ in one partite set if $k$ is odd and embedding a graph with $2k-1$ vertices, $k^2-3k/2$ edges with maximum degree $k-1$ in one partite set if $k$ is even. Our main result is as follows.

\begin{thm}\label{th1}
Suppose that $k\geq 1$ and $r\ge 2$ are integers. Let $H_i$ be an edge-critical graph with $\chi(H_i)=r$ for each $i\in\{1,\ldots,k\}$, and let $H:= \{H_1, H_2, \cdots, H_k\}+v$.
Then, for sufficiently large $n$,
\[
{\rm ex}(n, H)= t_r(n)+ \begin{cases}
 k^2-k      & \text {if $k$ is odd}, \\
 k^2-\frac{3}{2}k  & \text {if $k$ is even}.
\end{cases}
\]
Moreover, the $\mathcal{G}_{n, k ,r}$ is the family of extremal graphs for $H$.
\end{thm}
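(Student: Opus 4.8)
We outline the argument in two halves---the construction (lower bound) and a stability-plus-cleaning argument (upper bound). Throughout write $c_k:=k^2-k$ if $k$ is odd and $c_k:=k^2-\tfrac32 k$ if $k$ is even.

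\emph{Lower bound.} Every $G\in\mathcal{G}_{n,k,r}$ is $T_r(n)$ together with a graph $D$ of $c_k$ edges placed inside one part $V_1$, so $e(G)=t_r(n)+c_k$; in both cases $D$ has maximum degree $k-1$ and matching number at most $k-1$. To see that $G$ is $H$-free, suppose $H\subseteq G$ with center vertex $v$. Colouring $G[N(v)]$ by which part each vertex lies in is proper except on the edges of $D$, so $G[N(v)]\setminus E(D)$ is $(r-1)$-colourable; since every copy $H_i$ has $\chi(H_i)=r$, each $H_i\subseteq G[N(v)]$ must contain an edge of $D$. If $v\notin V_1$, the $k$ vertex-disjoint copies force $k$ vertex-disjoint edges of $D$, i.e.\ a matching of size $k$---impossible. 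If $v\in V_1$, then $N(v)\cap V_1=N_D(v)$, so each $H_i$ uses a vertex of $N_D(v)$ and disjointness forces $\deg_D(v)\ge k$---impossible. Hence ${\rm ex}(n,H)\ge t_r(n)+c_k$.

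\emph{Upper bound: embedding lemma and stability.} Let $G$ be $H$-free on $n$ vertices with $e(G)\ge t_r(n)+c_k$, $n$ large. The driving tool is the reverse of the observation above: there is $M=M(H)$ so that if a vertex $v$ is joined to the union of a complete $(r-1)$-partite graph with all parts of size $\ge M$ and either a $k$-set $W$ or a graph of matching number $\ge k$, then $H\subseteq G$. I would prove this by a ``one moved vertex'' trick: for each $i$ a proper $(r-1)$-colouring of $H_i-e_i$ assigns the two ends of the critical edge $e_i$ the same colour, so one embeds the colour classes into the $r-1$ large parts and either relocates one end of $e_i$ onto a private vertex of $W$ or realises $e_i$ on a private matching edge, then adds $v$; all $k$ copies are built disjointly inside the large parts. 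Since $\chi(H)=r+1$, the classical stability theorem (or Simonovits' method behind Theorem~\ref{Simonovits}) then gives a partition $V(G)=V_1\cup\cdots\cup V_r$, chosen to maximise the number of present cross-edges, with $\sum_i e(G[V_i])+\bigl(t_r(n)-e_G(V_1,\dots,V_r)\bigr)=o(n^2)$.

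\emph{Upper bound: cleaning, conclusion, and the main obstacle.} Using the embedding lemma one upgrades the partition to an exact description: $|V_i|=n/r+o(n)$ for each $i$, no vertex has more than boundedly many neighbours inside its part while having fewer than $n/r-o(n)$ outside (else the structure around it already realises a configuration from the previous paragraph), and a bootstrap removes the $o(n)$ exceptional vertices and yields that \emph{every} vertex has $O_k(1)$ own-part neighbours, so $D:=\bigcup_i G[V_i]$ has bounded maximum degree. A short symmetrisation-type argument (a missing cross-edge $uw$ with $u\in V_i$, $w\in V_j$ would let one route a copy of $H$ through $u$, $w$ and the nearly complete remainder) shows we may assume $G$ is complete $r$-partite across $V_1,\dots,V_r$, so $G=T'+D$ with $T'$ the complete $r$-partite graph on the $V_i$ and $e(D)=e(G)-e(T')\ge e(G)-t_r(n)\ge c_k$. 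Writing $D=D_1\cup\cdots\cup D_r$, placing the center in each $V_j$ and invoking the embedding lemma forces, for every $j$,
\[
\sum_{i\neq j}\nu(D_i)+\Delta(D_j)\le k-1,
\]
with the obvious change when $r=2$; in particular $\nu(D_i)\le k-1$ and $\Delta(D_i)\le k-1$ for all $i$. A combinatorial extremal lemma of Chv\'atal--Hanson/Erd\H{o}s--Gallai type (the one underlying Theorems~\ref{ooo} and~\ref{111}) then gives $\sum_i e(D_i)\le c_k$, with equality only if all edges of $D$ lie in one part, inducing two vertex-disjoint copies of $K_k$ when $k$ is odd or the $(2k-1)$-vertex, $(k^2-\tfrac32 k)$-edge, maximum-degree-$(k-1)$ graph when $k$ is even. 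Thus $e(D)=c_k$, which forces $e(T')=t_r(n)$, so the parts are balanced and $T'=T_r(n)$; hence $G\in\mathcal{G}_{n,k,r}$ and ${\rm ex}(n,H)=t_r(n)+c_k$. I expect the main obstacle to be the cleaning step---eliminating every exceptional vertex and every missing cross-edge to pin down the structure $T'+D$ exactly, with the embedding lemma made robust enough to absorb boundedly many defect vertices at once---together with extracting the sharp bound $e(D)\le c_k$ and its equality cases (where the $k$ odd/$k$ even dichotomy is born) from the displayed inequalities.
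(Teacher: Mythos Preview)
Your lower bound and embedding lemma are exactly right, and the overall architecture (stability $\to$ cleaning $\to$ combinatorial bound on the ``excess'' graph $D$) matches the paper's strategy. But there is a real gap at the symmetrisation step, and it is not the one you flagged.

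You write that ``a missing cross-edge $uw$ \dots\ would let one route a copy of $H$ through $u,w$''. This is backwards: a \emph{missing} edge cannot help you embed $H$, and the alternative reading (adding $uw$ keeps $G$ $H$-free) is not clear either, since adding a cross-edge may well complete a copy of $H$ that uses $uw$ in a non-critical position. More importantly, you need completeness across parts \emph{before} you derive your displayed inequality, because your inequality uses $\nu(D_i)=\nu(G[V_i])$: the embedding lemma, applied at a vertex $v\in V_j$, only tells you that
\[
d_{V_j}(v)+\sum_{i\neq j}\nu\bigl(G[N(v)\cap V_i]\bigr)\le k-1,
\]
and $\nu(G[N(v)\cap V_i])$ can be strictly smaller than $\nu(D_i)$ when cross-edges are missing. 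So you cannot pass from the per-vertex condition to the global one without already knowing completeness.

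The paper sidesteps this by never reducing to the complete $r$-partite case. It proves the per-vertex inequality above (together with $\Delta(G[V_i])\le k-1$ and $\sum_{j\ne i}\nu(G[V_j])\le k-1$), packages these three conditions as a ``$k$-good partition'', and then invokes a nontrivial lemma of Chen--Gould--Pfender--Wei (Lemma~\ref{main3} here) which shows directly that a $k$-good partition forces
\[
e(G)-\sum_{i<j}|V_i||V_j|\le f(k-1,k-1),
\]
with a sharp equality analysis. From $e(G)\ge t_r(n)+c_k$ one \emph{then} reads off that all cross-edges are present and the parts are balanced. In other words, completeness across parts is an \emph{output} of the combinatorial lemma, not an input; your attempt to establish it beforehand is where the argument breaks. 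The ``Chv\'atal--Hanson/Erd\H{o}s--Gallai type'' lemma you allude to is precisely Lemma~\ref{main3}, but it needs the per-vertex $k$-good conditions, not your displayed global inequality, and it is not a one-line consequence of Theorem~\ref{fnvdelta}. If you replace your symmetrisation step by establishing the $k$-good partition (which your cleaning already nearly gives) and then quote Lemma~\ref{main3}, the rest of your outline goes through and coincides with the paper's proof.
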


We also obtain the following stability theorem for $H$ defined as above, extending a result of Roberts and Scott \cite{robetscott2018} on edge-critical graphs (see Lemma \ref{Scott} for more details).
\begin{thm}\label{th1-stability}
Let $f(n)=o(n^2)$ be a function and let $H$ be defined as in Theorem \ref{th1}. If $G$ is an $H$-free graph with $n$ vertices and at least $t_r(n)-f(n)$ edges, then $G$ can be made $r$-partite graph by deleting $O(n^{-1}f(n)^{3/2})$ edges.
\end{thm}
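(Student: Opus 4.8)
The plan is to adapt the Roberts--Scott argument behind Lemma~\ref{Scott}, the new twist being that a single ``extra'' edge no longer creates the forbidden graph: one needs $k$ of them, suitably placed around a common centre. First, two elementary remarks about $H=\{H_1,\dots,H_k\}+v$. Since the centre is joined to every vertex of every $H_i$, in any proper colouring the colour of $v$ occurs nowhere else, so $\chi(H)=\max_i\chi(H_i)+1=r+1$. For each $i$ fix a critical edge $e_i=a_ib_i$ of $H_i$, so $\chi(H_i-e_i)=r-1$, i.e.\ $H_i-e_i$ has an $(r-1)$-colouring in which $a_i,b_i$ get the same colour; put $h:=\max_i|V(H_i)|$. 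We will only use: \textbf{(P1)} a complete $r$-partite graph all of whose parts have size $\ge kh$ contains $k$ pairwise disjoint copies of $H_1,\dots,H_k$ (embed the colour classes of each $H_\ell$ into disjoint length-$h$ blocks of the parts); and \textbf{(P2)} a large near-complete $(r{-}1)$-partite graph together with one edge $xy$ inside a colour class contains a copy of $H_i$ with $e_i$ placed on $xy$ --- this is where edge-criticality is used. Now since $\chi(H)=r+1$ and $e(G)\ge t_r(n)-o(n^2)$, a classical stability argument makes $G$ $r$-partite after deleting $o(n^2)$ edges; fix an $r$-partition $P=(V_1,\dots,V_r)$ \emph{minimising} the number $t$ of internal edges, so $t=o(n^2)$, all parts have size $n/r+o(n)$, and $t$ is exactly what we must bound. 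For $v\in V_i$ write $d_{\mathrm{in}}(v),d_{\mathrm{out}}(v)$ for its internal/external degrees and $\mathrm{miss}(v):=(n-|V_i|)-d_{\mathrm{out}}(v)$. Minimality of $P$ gives $d_{\mathrm{in}}(v)\le|N(v)\cap V_j|$ for all $j\ne i$, so $v$ meets every other part in $\ge d_{\mathrm{in}}(v)$ vertices and $d_{\mathrm{in}}(v)\le n/(r-1)$; and since a complete $r$-partite graph has at most $t_r(n)$ edges, $\sum_v\mathrm{miss}(v)\le 2(f+t)$.

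The core runs with a threshold $\delta>0$ to be optimised. Call $v$ \emph{good} if $\mathrm{miss}(v)<\delta n$, else \emph{bad}; then there are at most $2(f+t)/(\delta n)$ bad vertices. I claim: \textbf{(i)} if a good vertex, or any vertex $w$ with $d_{\mathrm{in}}(w)\ge 3kh\,\delta n$, has at least $kh$ good internal neighbours, then $H\subseteq G$; and \textbf{(ii)} if the internal edges with both endpoints good contain a matching of size $kr$, then $H\subseteq G$. For (i), take $w$ as the centre of $H$: its $kh$ good internal neighbours lie in one part and miss fewer than $\delta n$ vertices of every other part, while $w$ meets every other part in $\gg kh$ vertices; after discarding the $o(n^2)$ missing external edges, $N(w)$ contains a complete $r$-partite subgraph with all parts of size $\ge kh$, and (P1) finishes. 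For (ii), among $kr$ disjoint good internal edges some $k$ lie in one part $V_i$; picking a good centre $w$ outside $V_i$ adjacent to all $2k$ endpoints, one builds $k$ disjoint copies $H_\ell\subseteq N(w)$ using these edges as the critical edges $e_\ell$ via (P2) and routing the remaining colour classes through the other $r-1$ parts. From (i), every good vertex has $<kh$ good internal neighbours, so the good internal edges have maximum degree $<kh$; with (ii) their matching number is $<kr$, hence there are at most $2kr\cdot kh=O(1)$ of them. Also from (i), any $v$ with $d_{\mathrm{in}}(v)\ge 3kh\,\delta n$ has $>d_{\mathrm{in}}(v)-kh$ \emph{bad} internal neighbours, so $(d_{\mathrm{in}}(v)-kh)\,\delta n\le\sum_u\mathrm{miss}(u)\le 2(f+t)$; thus
\[
\Delta_{\mathrm{in}}:=\max_v d_{\mathrm{in}}(v)\le\max\!\bigl(3kh\,\delta n,\ 2(f+t)/(\delta n)+kh\bigr),
\]
and balancing the two terms (legitimate because $f+t=o(n^2)$ forces the optimal $\delta$ below any fixed constant) gives $\Delta_{\mathrm{in}}=O\bigl((f+t)^{1/2}\bigr)$.

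To finish, split the internal edges into those with both endpoints good ($O(1)$ of them, by the above) and those with a bad endpoint, and handle the latter by a charging argument in the spirit of Roberts--Scott: each such edge forces, in a bounded neighbourhood of it, at least $cn$ missing external edges (for a constant $c=c(r,h)>0$), for otherwise it could be combined with $k-1$ further disjoint internal edges and a common centre to yield $H$; since each missing external edge is charged by at most $2\Delta_{\mathrm{in}}$ internal edges and $\sum_v\mathrm{miss}(v)\le 2(f+t)$, we get $t\le O(1)+O\bigl((f+t)\Delta_{\mathrm{in}}/n\bigr)=O\bigl((f+t)^{3/2}/n\bigr)$. Finally the weak bound $t=o(n^2)$ removes the $t$ on the right: $t\le C(f+t)^{3/2}/n$ forces either $t\le f$ or $t=\Omega(n^2)$, and the latter is excluded, so $t=O\bigl(n^{-1}f(n)^{3/2}\bigr)$, as required.

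The main obstacle is making the charging step precise: one must verify that the internal edges accumulating near low-degree (``bad'') vertices are genuinely paid for by missing external edges at the correct quantitative rate, and that all the embeddings above survive the deletion of the few, but possibly high-defect, bad vertices --- exactly the delicate part already present in the Roberts--Scott analysis (the case $k=1$ of the theorem is literally Lemma~\ref{Scott}). The only genuinely new feature for $k\ge 2$ is that the forbidden local object inside a neighbourhood $N(w)$ is a $k$-fold \emph{packing} of edge-critical graphs rather than a single graph, which is why ``one extra internal edge at the centre'' must be upgraded to ``$k$ disjoint extra internal edges, or one large clique-like part, at the centre'' before a copy of $H$ appears.
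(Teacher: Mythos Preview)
Your setup through the bound $\Delta_{\mathrm{in}}=O\bigl((f+t)^{1/2}\bigr)$ and the $O(1)$ bound on good--good internal edges is essentially what the paper does (its Claim~\ref{CLAIM:D_(V)=o(F)} is exactly your internal-degree bound, proved the same way via Lemma~\ref{t_k^*}). The divergence, and the genuine gap, is the final ``charging'' step, which you yourself flag as the main obstacle.

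Your charging asserts that each bad-incident internal edge ``forces at least $cn$ missing external edges nearby, for otherwise it could be combined with $k-1$ further disjoint internal edges and a common centre to yield $H$''. But this is circular: a \emph{single} internal edge forces nothing when $k\ge 2$, and the ``otherwise'' clause presupposes that $k-1$ other suitable internal edges are available near a common centre --- which is exactly the quantity you are trying to control. The companion claim ``each missing external edge is charged by at most $2\Delta_{\mathrm{in}}$ internal edges'' also has no clear meaning once the ``neighbourhood'' of an internal edge spreads across several parts. Concretely, with your threshold $\delta n\asymp (f+t)^{1/2}$ you get $|\text{bad}|=O\bigl((f+t)^{1/2}\bigr)$, so the honest product $|\text{bad}|\cdot\Delta_{\mathrm{in}}=O(f+t)$ gives nothing, and no choice of $\delta$ makes both factors small enough simultaneously.

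The paper avoids charging altogether. It first proves a stepping-stone, Lemma~\ref{Ofn}: $G$ can be made $r$-partite by deleting $O(f(n))$ edges. Inside that lemma the ``bad'' set $L$ is defined differently --- by iteratively removing vertices of \emph{global} degree below $(1-\delta)\tfrac{r-1}{r}|V|$ with $\delta$ a fixed constant --- and the crucial input is Theorem~\ref{th1}: since $G'=G-L$ is still $H$-free, $e(G')\le t_r(|G'|)+k^2$, and comparing edge counts forces $|L|=O(f(n)/n)$, not merely $O(f^{1/2})$. With Lemma~\ref{Ofn} in hand the optimal partition has $O(f)$ internal and $O(f)$ missing edges, so $\Delta_{\mathrm{in}}=O(f^{1/2})$; now the crude product $|L|\cdot\Delta_{\mathrm{in}}=O(f/n)\cdot O(f^{1/2})=O(n^{-1}f^{3/2})$ already bounds the bad-incident internal edges, and the remaining $O(1)$ edges inside $G'$ come from the end of the proof of Lemma~\ref{Ofn}. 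The ingredient you are missing is that the exact extremal result, Theorem~\ref{th1}, is what pins the number of low-degree vertices down to $O(f/n)$; without invoking it, your bad set is too large by a factor of order $n/f^{1/2}$, and no local charging recovers that loss.
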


This paper is organized as follows. In the remainder of this section, we describe notations and terminologies used in our proofs. In Section \ref{Reduction}, we make a reduction of Theorem \ref{th1}, and prove it assuming Theorem \ref{th1weaker}. We prove Theorem \ref{th1weaker} in Section \ref{Lemma}. In Section 4, we prove Theorem \ref{th1-stability}.
\vspace{0.2cm}

\noindent {\bf Natation.} Let $G=(V(G), E(G))$ be a graph. We use $e(G)$ denote $|E(G)|$. We use $\delta(G)$ and $\Delta(G)$ denote the minimum and maximum degrees in $G$, respectively. For $S, T\subseteq V(G)$, we use $G[S]$ denote the graph induced by $S$. For $v\in V(G)$, let $N_S(v)$ denote the set of vertices in $S$ adjacent to $v$ and $d_S(v)=|N_S(v)|$. Let $N_S(T)=\cap_{v\in T}N_S(v)$. Let $V-S=\{v\in V : v\notin S\}$. In particular, if $S=V(G)$, then we substitute $N_G(v)$ and $d_G(v)$ for $N_{V(G)}(v)$ and $d_{V(G)}(v)$, respectively. A \emph{matching} in $G$ is a set of edges from $E(G)$, no two of which share a common vertex. The \emph{matching number} of $G$, denoted by $\nu(G)$, is the maximum number of edges in a matching in $G$. An $r$-partition of $G$ is a partition of $V(G)$ into $r$ pairwise disjoint nonempty subsets $V_1,V_2,\ldots,V_r$. For an integer $t$, let $[t]=\{1, 2, \ldots, t\}$.

\section{Reduction to $H$-free graphs with large minimum degree}\label{Reduction}
In this section, we make a reduction in preparation for the proof of Theorem \ref{th1}. We first introduce a function related to the number of edges in a graph with bounded matching number and maximum degree.

Let $G$ be a graph with its matching number $\nu(G)$ and maximum degree $\Delta(G)$. Define
\[
f(\nu, \Delta)=\max\{e(G)\,|\, \nu(G)\leq \nu, \Delta(G)\leq \Delta\}.
\]
Abbott, Hanson and Sauer \cite{Abbott1972} studied this function for $\nu=\Delta=k-1$, and proved that
$$f(k-1, k-1)=\begin{cases}
 k^2-k      & \text {if $k$ is odd}, \\
 k^2-\frac{3}{2}k  & \text {if $k$ is even}.
\end{cases}$$
The extremal graphs are graphs with $2k-1$ vertices, $k^2-3k/2$ edges with maximum degree $k-1$ if $k$ is even, or two vertex disjoint copies of $K_k$ if $k$ is odd. For general $\nu$ and $\Delta$, Chv\'{a}tal and Hanson \cite{Hanson1976} established the following theorem.
\begin{thm}[Chv\'{a}tal and Hanson \cite{Hanson1976}]\label{fnvdelta}
For every $\nu \geq 1$ and $\Delta \geq 1$,
\begin{equation}\label{f(nu_delta)}
f(\nu, \Delta)=\nu\Delta+\left\lfloor\frac{\Delta}{2}\right\rfloor \left\lfloor\frac{\nu}{\lceil\Delta/2\rceil}\right\rfloor \leq \nu\Delta + \nu.
\end{equation}
\end{thm}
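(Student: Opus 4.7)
The plan is to prove the equality $f(\nu, \Delta) = \nu\Delta + \lfloor\Delta/2\rfloor \lfloor\nu/\lceil\Delta/2\rceil\rfloor$, from which the rightmost inequality $f(\nu, \Delta) \le \nu\Delta + \nu$ in (\ref{f(nu_delta)}) follows at once, since $\lfloor\Delta/2\rfloor \lfloor\nu/\lceil\Delta/2\rceil\rfloor \le \lceil\Delta/2\rceil \cdot \bigl(\nu/\lceil\Delta/2\rceil\bigr) = \nu$. Thus I need to establish matching upper and lower bounds on $f(\nu,\Delta)$.

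For the lower bound I would exhibit an extremal graph. Writing $\nu = q\lceil\Delta/2\rceil + s$ with $0 \le s < \lceil\Delta/2\rceil$, take a disjoint union of $q$ \emph{blocks} together with one residual component. A block is $K_{\Delta+1}$ when $\Delta$ is even and $K_{\Delta+2}$ with a minimum edge cover removed when $\Delta$ is odd; in either parity a direct check (using $\rho(K_{\Delta+2}) = (\Delta+3)/2$ in the odd case) shows that the block has matching number exactly $\lceil\Delta/2\rceil$, maximum degree exactly $\Delta$, and exactly $\Delta\lceil\Delta/2\rceil + \lfloor\Delta/2\rfloor$ edges. The residual component can be taken to be the complete bipartite graph $K_{s,\Delta}$, contributing $s\Delta$ edges while using only $s$ units of the matching budget. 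Summing yields $q\bigl(\Delta\lceil\Delta/2\rceil + \lfloor\Delta/2\rfloor\bigr) + s\Delta = \nu\Delta + \lfloor\Delta/2\rfloor\lfloor\nu/\lceil\Delta/2\rceil\rfloor$, as required.

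For the upper bound, the weaker inequality $f(\nu, \Delta) \le \nu\Delta + \nu$ is immediate from Vizing's theorem: since $\chi'(G) \le \Delta(G) + 1 \le \Delta + 1$, the edge set of $G$ decomposes into at most $\Delta + 1$ matchings, each of size at most $\nu(G) \le \nu$. The exact bound I would prove by induction on $\nu$. Given $G$ with $\nu(G) \le \nu$ and $\Delta(G) \le \Delta$, take a maximum matching $M$ and look for a subset $B \subseteq V(G)$ whose removal reduces the matching number by exactly $\lceil\Delta/2\rceil$ and destroys at most $\Delta\lceil\Delta/2\rceil + \lfloor\Delta/2\rfloor$ edges. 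In the extremal construction every $K_{\Delta+1}$-block (or its odd-$\Delta$ variant) provides such a $B$; applying the inductive hypothesis to $G - B$, which has matching number at most $\nu - \lceil\Delta/2\rceil$, then closes the count exactly.

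The main obstacle is producing the set $B$ in an arbitrary $G$: a priori there is no reason for a nearly-clique induced subgraph to exist, and even when one does the edges from it to the rest of $G$ may wreck the accounting. The classical resolution, due to Chv\'atal and Hanson \cite{Hanson1976}, is a careful structural analysis based on the Gallai-Edmonds decomposition of $G$ together with local exchanges along alternating paths of $M$, which force the extremal graphs to decompose into blocks of the above form. Handling the parity distinction between even and odd $\Delta$ and the residual component governed by $s$ adds bookkeeping but no new ideas.
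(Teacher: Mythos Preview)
The paper does not prove Theorem~\ref{fnvdelta}: it is quoted as a result of Chv\'{a}tal and Hanson~\cite{Hanson1976} and used as a black box, with no argument supplied. So there is no ``paper's own proof'' to compare your proposal against.

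That said, a brief assessment of your sketch on its own terms: the derivation of the inequality $f(\nu,\Delta)\le \nu\Delta+\nu$ from the exact formula is correct, and your Vizing argument gives this weak bound directly and cleanly. Your lower-bound construction is also correct (the edge counts for both the even and odd blocks check out). For the exact upper bound, however, you do not actually give a proof: you describe an inductive scheme, identify the obstacle (producing the set $B$ in a general graph), and then defer to the original Chv\'{a}tal--Hanson structural analysis. That is an honest summary, but it means your proposal, like the paper, ultimately relies on~\cite{Hanson1976} for the hard direction rather than proving it independently.
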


\begin{definition}[Good partition]\label{defgood}
For two integers $k,r\ge2$, call a partition $V_1, V_2,\ldots,V_r$ of $G$ $k$-good, if the following properties hold for each $i \in [r]$:
\begin{enumerate}
\item[$(\mathrm{i})$] $V_i \neq \emptyset \, \, \, and \, \, \, \Delta(G[V_i])\leq k-1$,
\item[$(\mathrm{ii})$] $\sum_{j\in[r] \setminus \{i\}}\nu (G[V_j])\leq k-1$ and
\item[$(\mathrm{iii})$] $d_{V_i}(u)+\sum_{j\in[r] \setminus \{i\}} \nu \left(G[N_G(u)\cap V_j]\right)\leq k-1$ for each $u\in V_i$.
\end{enumerate}
\end{definition}

Chen, Gould, Pfender and Wei \cite{Chengould2003} characterised the properties of a $k$-good partition of $G$ by showing the following lemma.
\begin{lem}[Chen, Gould, Pfender and Wei \cite{Chengould2003}]\label{main3}
Suppose that $G$ has a $k$-good partition $V_1, V_2, \ldots, V_r$. Let $G'$ be the minimal induced subgraph of $G$ such that $e(G')-\sum_{1\leq i<j \leq r}|V'_i||V'_{j}|$ is maximal, where $V_i'=V(G')\cap V_i$ for each $i\in [r]$. Then the following properties hold:
\begin{enumerate}
\item[$(\mathrm{i})$] $e(G')-\sum_{1\leq i<j \leq r}|V'_i||V'_{j}|\le f(k-1,k-1)$;
\item[$(\mathrm{ii})$] For each $i\in [r]$ and $x\in V_i'$, we have $0<d_{G'}(x)-|V(G')\backslash V_i'|\leq k-1-\sum_{j\in [r]\setminus \{i\}}\nu(G'[V_j'])$;
\item[$(\mathrm{iii})$]  If $\nu(G'[V_i'])\geq 2$ for each $i\in [r]$, then $e(G')-\sum_{1\leq i<j \leq r}|V'_i||V'_{j}|< f(k-1, k-1).$
\end{enumerate}
\end{lem}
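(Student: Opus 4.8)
The final statement is Lemma~\ref{main3}, attributed to Chen, Gould, Pfender and Wei, which analyzes the structure of a graph admitting a $k$-good partition. Here is how I would approach its proof.

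\medskip

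\textbf{Setup and choice of $G'$.} The plan is to work directly with the extremal induced subgraph $G'$ defined in the statement: among all induced subgraphs, choose one maximizing $e(G')-\sum_{1\le i<j\le r}|V_i'||V_j'|$, and among those the one with fewest vertices. Write $D(G'):=e(G')-\sum_{1\le i<j\le r}|V_i'||V_j'|$ for the ``surplus'' over the complete $r$-partite graph on the same parts; note $D(G')\ge D(G[V_i])$ for any single part, and more usefully $D(G')\ge 0$ since deleting all but one part leaves a nonnegative quantity. The key observation, which I would establish first, is that for each vertex $x\in V_i'$ the quantity $d_{G'}(x)-|V(G')\setminus V_i'|$ equals the change in $D$ upon deleting $x$; hence by minimality of $|V(G')|$ this change is positive for every $x$, which is the lower bound $0<d_{G'}(x)-|V(G')\setminus V_i'|$ in part~(ii). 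For the upper bound in (ii), deleting $x$ yields a smaller induced subgraph to which we may feed the inductive/structural information; more concretely, $d_{G'}(x)-|V(G')\setminus V_i'|=d_{V_i'}(x)-|V_i'\setminus(\{x\}\cup N(x))|\le d_{V_i'}(x)$, and then one uses good-partition property~(iii) together with the fact that $\nu(G'[N_{G'}(x)\cap V_j'])=\nu(G'[V_j'])$ when $x$ is adjacent to all of each $V_j'$ — the minimality forcing near-completeness of the cross-edges. This is the delicate bookkeeping step.

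\medskip

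\textbf{Part (i): the surplus bound.} With (ii) in hand, I would bound $D(G')$ by summing the per-vertex surpluses. Writing $D(G')=\sum_i\sum_{x\in V_i'}\tfrac12\bigl(d_{G'}(x)-|V(G')\setminus V_i'|\bigr)$ is not quite right dimensionally, so instead I would argue part by part: within $V_i'$, the graph $G'[V_i']$ has $\Delta\le k-1$ by good-partition property~(i), and part~(ii) bounds the ``effective degree'' $d_{G'}(x)-|V(G')\setminus V_i'|\le k-1-\sum_{j\ne i}\nu(G'[V_j'])$. Crucially the sum $\sum_{j\ne i}\nu(G'[V_j'])$ relates different parts, so one plays the parts against each other: the total surplus is $\sum_i D(G'[V_i'])$ plus nonpositive cross-terms, and each $D(G'[V_i'])\le f(\nu_i,k-1)$ where $\nu_i=\nu(G'[V_i'])$; combining with $\sum_{j\ne i}\nu_j\le k-1$ (property~(ii)) and Theorem~\ref{fnvdelta} gives $D(G')\le f(k-1,k-1)$. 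The arithmetic here is exactly where Chv\'atal--Hanson's formula \eqref{f(nu_delta)} is used, and where the even/odd distinction in $f(k-1,k-1)$ surfaces.

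\medskip

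\textbf{Part (iii): strict inequality under $\nu(G'[V_i'])\ge2$ for all $i$.} If every part has matching number at least $2$, then $\sum_{j\ne i}\nu(G'[V_j'])\ge 2(r-1)\ge 2$, so by property~(ii) each part satisfies $\Delta(G'[V_i'])\le k-1$ but the ``budget'' $k-1-\sum_{j\ne i}\nu(G'[V_j'])\le k-3$ is strictly smaller; retracing the bound in part~(i) with this strict slack shows $D(G')<f(k-1,k-1)$. Care is needed to check that the slack genuinely propagates through and is not absorbed by some rounding in the Chv\'atal--Hanson formula — that is the main obstacle here, and the main obstacle for the whole lemma: making sure the chain of inequalities is tight enough to yield equality conditions in part~(i) and strictness in part~(iii), rather than merely an asymptotic bound. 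I would handle this by keeping track of exactly when equality holds in each step (when $G'[V_i']$ is an Abbott--Hanson--Sauer extremal graph, when cross-edges are complete, when the $\nu_j$ achieve their extreme configuration) and showing these are jointly incompatible with all $\nu_i\ge2$.
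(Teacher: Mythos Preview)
The paper does not give its own proof of this lemma: it is simply quoted from Chen--Gould--Pfender--Wei \cite{Chengould2003} and used as a black box. So there is no in-paper argument to compare against.

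As for your sketch on its own merits: the overall architecture matches the original --- minimality of $G'$ gives the strict lower bound in (ii), the good-partition axioms give the upper bound, and then (i) follows by bounding $D(G')\le\sum_i e(G'[V_i'])\le\sum_i f(\nu_i,k-1)$ and using the constraint $\sum_{j\ne i}\nu_j\le k-1$ together with Chv\'atal--Hanson. However, there is a real gap in your justification of the upper bound in (ii). You claim that minimality forces $x$ to be adjacent to all of each $V_j'$, so that $\nu(G'[N_{G'}(x)\cap V_j'])=\nu(G'[V_j'])$. This is false: positivity $d_{G'}(x)>|V(G')\setminus V_i'|$ only says that $d_{V_i'}(x)$ exceeds the number $\sum_{j\ne i}m_j$ of non-neighbours of $x$ in the other parts (where $m_j=|V_j'\setminus N_{G'}(x)|$), not that this number is zero. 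The correct step is that removing $m_j$ vertices from $V_j'$ lowers the matching number by at most $m_j$, so
\[
\nu(G'[V_j'])-m_j\;\le\;\nu\bigl(G'[N_{G'}(x)\cap V_j']\bigr)\;\le\;\nu\bigl(G[N_G(x)\cap V_j]\bigr).
\]
Summing over $j\ne i$ and invoking good-partition property~(iii) gives
\[
d_{V_i'}(x)+\sum_{j\ne i}\nu(G'[V_j'])-\sum_{j\ne i}m_j\;\le\;k-1,
\]
and since $d_{G'}(x)-|V(G')\setminus V_i'|=d_{V_i'}(x)-\sum_{j\ne i}m_j$, the upper bound in (ii) follows. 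With this fix your outline for (i) and (iii) is essentially correct.
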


Now, we begin with a reduction of our main theorem via the existence of a $k$-good partition, and prove Theorem \ref{th1} assuming Theorem \ref{th1weaker}. We leave the proof of Theorem \ref{th1weaker} in the next section.
\begin{thm}\label{th1weaker}
For two integers $k,r\ge2$, let $H_i$ be an edge-critical graph with $\chi(H_i)=r$ for each $i\in [k]$, and let $H := \{H_1, H_2, \ldots, H_k\}+u$. If $G$ is an $H$-free graph with $n$ vertices and $\delta(G)\geq \frac{r-1}{r}n-k$, then $G$ contains a $k$-good partition  for sufficiently large $n$.
\end{thm}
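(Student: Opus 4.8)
The plan is to first locate a near-optimal $r$-partition of $G$ and then refine it until it becomes $k$-good. Observe first that $H$ embeds into the complete $(r+1)$-partite graph $K_{r+1}(t,\ldots,t)$ with $t=\sum_{i=1}^{k}|V(H_i)|$: place the centre in one part, so it is joined to the remaining $K_r(t,\ldots,t)$, which accommodates $k$ pairwise vertex-disjoint copies of $H_1,\ldots,H_k$ since each $H_i$ is $r$-colourable on at most $t$ vertices per class. Hence $G$ is $K_{r+1}(t,\ldots,t)$-free. Since $e(G)\ge\frac12 n\,\delta(G)=\bigl(1-\tfrac1r-o(1)\bigr)\binom n2$, the Erd\H{o}s-Simonovits stability theorem (for graphs of chromatic number $r+1$) yields an $r$-partition of $V(G)$ with $o(n^2)$ edges inside the parts. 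Among all such partitions I fix one, $V_1,\ldots,V_r$, minimising the number of \emph{internal} edges $\sum_i e(G[V_i])$, and aim to prove it is $k$-good.

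Next I extract the rough structure. From $e(G)\ge t_r(n)-O(n)$ and $e(G)\le\sum_{i<j}|V_i||V_j|+\sum_i e(G[V_i])$ one deduces $\sum_{i<j}|V_i||V_j|=t_r(n)-o(n^2)$, hence $|V_i|=n/r+o(n)$ for every $i$ (in particular $V_i\neq\emptyset$) and only $o(n^2)$ of the crossing pairs fail to be edges. Minimality gives $d_{V_i}(u)\le d_{V_j}(u)$ whenever $u\in V_i$ and $j\neq i$. I claim $\Delta(G[V_i])=o(n)$ for all $i$: if some $u\in V_i$ had $d_{V_i}(u)\ge\varepsilon n$, then $d_{V_j}(u)\ge\varepsilon n$ for all $j$, so $N(u)$ meets every part in $\ge\varepsilon n$ vertices; since only $o(n^2)$ crossing pairs are non-edges, $G[N(u)]$ contains $K_r(t,\ldots,t)$, hence $H_1\cup\cdots\cup H_k$, hence $H\subseteq G$, a contradiction. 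Consequently every $x\in V_i$ is non-adjacent to only $o(n)$ vertices of $V-V_i$, and every $u\in V_i$ satisfies $d_{V_j}(u)=n/r-o(n)$ for each $j\neq i$.

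The heart of the argument is an embedding lemma exploiting edge-criticality. Fix $i$, $u\in V_i$, a critical edge $e_l=x_ly_l$ of $H_l$, and an $(r-1)$-colouring of $H_l-e_l$; then $x_l,y_l$ lie in a common colour class $C$ whose only internal $H_l$-edge is $e_l$. Given any vertex $w\in V_i\cap N(u)$ one obtains a copy of $H_l$ inside $N(u)$ by embedding the $r-2$ classes other than $C$, together with $C\setminus\{y_l\}$, injectively into the $r-1$ parts $V_j\cap N(u)$ $(j\neq i)$, and mapping $y_l$ to $w$; because $C$ carries only $e_l$ and every vertex used misses only $o(n)$ of each part, all required crossing adjacencies can be realised. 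Symmetrically, any edge lying inside some $V_j\cap N(u)$ with $j\neq i$ can play the role of $e_l$ and yields a copy of $H_l$ using only $O(|V(H_l)|)$ further vertices of $\bigcup_{j\neq i}(V_j\cap N(u))$. Since the parts are linear-sized and each copy uses $O(1)$ vertices, any $O(1)$ such copies can be taken pairwise vertex-disjoint. Carrying out this embedding cleanly — realising every edge of every $H_l$ while avoiding the $o(n^2)$ defect crossing edges and keeping all $\le k$ copies disjoint — is the main technical point, and it dictates how small $\varepsilon$ and how large $n$ must be chosen.

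Finally I derive the three defining properties of a $k$-good partition. For (i): if $d_{V_i}(u)\ge k$ for some $u\in V_i$, apply the block $k$ times using $k$ distinct vertices of $V_i\cap N(u)$ to get disjoint copies of $H_1,\ldots,H_k$ in $N(u)$, i.e.\ $H\subseteq G$; hence $\Delta(G[V_i])\le k-1$, and then every vertex misses only $o(n)$ of each other part. For (iii): if $d_{V_i}(u)+\sum_{j\neq i}\nu(G[N(u)\cap V_j])\ge k$, combine $d_{V_i}(u)$ copies built from distinct vertices of $V_i\cap N(u)$ with $\sum_{j\neq i}\nu(G[N(u)\cap V_j])$ copies built from maximum matchings inside the sets $V_j\cap N(u)$; these are pairwise disjoint and number at least $k$, again giving $H\subseteq G$. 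For (ii): if $\sum_{j\neq i}\nu(G[V_j])\ge k$, fix $k$ disjoint edges in $\bigcup_{j\neq i}V_j$; by (i) each of their $2k$ endpoints is non-adjacent to only $o(n)<|V_i|$ vertices of $V_i$, so some $u\in V_i$ is adjacent to all of them, which places these edges inside $N(u)$, and the block once more yields $H\subseteq G$. Together with $V_i\neq\emptyset$, this shows $V_1,\ldots,V_r$ is $k$-good.
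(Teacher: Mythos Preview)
Your argument is correct, and it takes a genuinely different (and shorter) route than the paper's.

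The paper does not apply stability to $G$ itself. Instead it first passes to an edge-maximal $H$-free supergraph, finds inside it a near-copy $D$ of $H$ with centre $v_0$, and then works in the neighbourhood of $v_0$: the set $S\subseteq N_G(v_0)\setminus V(D')$ is $H_k$-free (because $v_0$ already sees copies of $H_1,\ldots,H_{k-1}$), so the Roberts--Scott stability lemma for a single edge-critical graph (Lemma~\ref{Scott}) produces an $(r-1)$-partition $S_1,\ldots,S_{r-1}$ of $S$ with very few internal edges. The remaining $r$-th part is reconstructed by hand from $S_0=V(G)\setminus(S\cup V(D'))$: one isolates a bounded set $S_0^*$ of vertices with too many neighbours inside $S_0$, and then (Claim~\ref{j(v)}) every leftover vertex has a unique part in which it has fewer than $k$ neighbours, which tells you where to put it. Only after this reassembly does the paper verify conditions (i)--(iii), by the same kind of ``if the condition fails, embed $H$ and contradict a missing-edge count'' argument that you use.

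Your approach bypasses all of this: you observe that $G$ is $K_{r+1}(t,\ldots,t)$-free for a suitable $t$, apply the classical Erd\H{o}s--Simonovits stability theorem directly to $G$ to get an $r$-partition with $o(n^2)$ internal edges, and then take the $r$-partition minimising the number of internal edges. Minimality immediately gives $d_{V_i}(u)\le d_{V_j}(u)$, from which you bootstrap first to $\Delta(G[V_i])=o(n)$ and then to the three $k$-good conditions via a direct greedy embedding exploiting the edge-criticality of each $H_l$. This is more elementary --- it avoids the Roberts--Scott lemma entirely and needs no edge-maximality reduction --- and conceptually cleaner. What the paper's detour buys is a sharper quantitative handle on the partition (via the $O(n^{-1}f(n)^{3/2})$ bound of Lemma~\ref{Scott}), which is not needed for Theorem~\ref{th1weaker} itself but is reused later in the proof of the stability result, Theorem~\ref{th1-stability}.
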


\begin{proof}[{\bf Proof of Theorem \ref{th1} given Theorem \ref{th1weaker}}]
Let $\mathcal{G}_{n, k, r}$ be the family of graphs defined in Section \ref{Intro}. We first show that $G$ is $H$-free for each $G\in \mathcal{G}_{n, k ,r}$. Otherwise, we consider an embedding of $H$ with center vertex $v$ into $G$. Without loss of generality, we may assume that $e(G[V_1])=f(k-1, k-1)$. Note that $E(H_j+v)\cap E(G[V_1])\neq \emptyset$ for each $j\in [k]$ in view of $\chi(H_j)=r$. It follows that $v\notin V_1$ as $\Delta(G[V_1])<k$ by the construction of $\mathcal{G}_{n, k, r}$. Suppose that $v\in V_s$ for some $s\in [r]\setminus \{1\}$. In this situation, we have $E(H_j+v)\cap E(G[V_1])$ are pairwise disjoint for any $j\in [k]$. This means that $\nu(G[V_1])\geq k$, a contradiction. Thus, $G$ is $H$-free and $e(G)= t_r(n)+ f(k-1, k-1)$, implying the lower bound.

In what follows, we prove that $e(G)\le t_r(n)+ f(k-1, k-1)$ for any $H$-free graph $G$ on $n$ vertices. We first show that this is true if $\delta(G)\geq \frac{r-1}{r}n-k$. By Theorem \ref{th1weaker} and Lemma \ref{main3}, there is a $k$-good partition $V_1, V_2, \ldots, V_r$ of $G$ such that
\begin{align*}
e(G)\leq \sum_{1\leq i<j \leq r}|V_i||V_{j}|+f(k-1, k-1) \leq t_r(n)+f(k-1, k-1),
\end{align*}
as desired. Next, we aim to deal with small vertices. For a graph $F$ and $f\in V(F)$, we say $f$ is a \emph{small vertex} of $F$ if $d(f)<\frac{r-1}{r}|V(F)|-k$. We first delete a small vertex in $G$. As long as there is a small vertex in the resulting graph, we delete it. We keep doing this until the remaining graph $G^*$ ($G^*$ maybe empty) has no small vertices. If $n^*:=|V(G^*)|<\sqrt{kn/ 4}$, then
\begin{align*}
e(G)&< e(G^*)+\sum_{i=n^*+1}^{n}\left(\frac{r-1}{r}i-k\right)\\
&< \frac{kn}{8} + \frac{r-1}{r}\frac{(n+\sqrt{kn/4})(n-\sqrt{kn/4})}{2}-\left(n-\sqrt{\frac{kn}{4}}\right)k\\
& < \frac{r-1}{r}\frac{n(n-1)}{2}\leq t_r(n),
\end{align*}
as required. Thus, we may assume that $n^*$ is sufficiently large and $\delta(G^*)\geq \frac{r-1}{r}n^*-k$. This implies that $e(G^*)\leq t_r(n^*)+f(k-1, k-1)$ as $G^*$ is also $H$-free. It follows that
\begin{align}\label{eG(>)}
e(G) &< e(G^*)+\sum\limits_{i=n^*+1}^{n}\left(\frac{r-1}{r}i-k\right)\notag\\
& \leq  t_r(n^*)+f(k-1, k-1)+\sum\limits_{i=n^*+1}^{n}\left(\frac{r-1}{r}i-k\right)\notag\\
& \leq  t_r(n) + f(k-1, k-1),
\end{align}
where the last inequality holds as $t_r(s-1)+\frac{r-1}{r}(s-1)\leq t_r(s)$. Thus, $e(G)< t_r(n)+f(k-1, k-1)$.

Now, we prove the uniqueness of the extremal graph. Let $G$ be an $H$-free graph with
\begin{equation}\label{ss}
e(G)=t_{r}(n)+f(k-1, k-1).
\end{equation}
Then $\delta (G)\geq \frac{r-1}{r}n-k$ by the above argument. It follows from Theorem \ref{th1weaker} that $G$ has a $k$-good partition $V_1,V_2,\ldots,V_r$. By Lemma \ref{main3}, there exists a minimal induced subgraph $G'$ of $G$ such that
\begin{align}\label{lll}
e(G)-\sum_{1\leq i<j \leq r}|V_i||V_{j}|=e(G')-\sum_{1\leq i<j \leq r}|V'_i||V'_{j}|=f(k-1, k-1).
\end{align}
Without loss of generality, suppose that $|V'_i|>0$ for $1\leq i\leq s$ and $|V'_i|=0$ for $s+1\leq i\leq r$. Then, for each $i\in [s]$ and $x\in V_i'$
\[
0<d_{G'}(x)-|V(G')\backslash V_i'|\leq k-1-\sum_{j\in [s]\setminus \{i\}}\nu(G'[V_j']),
\]
implying that $\nu(G'[V_i'])\ge1$ and $\sum_{j\in [s]\setminus \{i\}}\nu(G'[V_j'])\le k-2$. In addition, by \eqref{ss} and \eqref{lll}, we have
\begin{align}\label{sum_e(V_i)}
\sum_{i=1}^re(G[V_i])=\sum_{i=1}^{s}e(G'[V'_i])=f(k-1, k-1).
\end{align}

Case 1. $\sum_{j\in [s]\setminus \{i_0\}}\nu(G'[V_j'])=0$ for some $i_0\in[s]$. This implies that $V_j'=\emptyset$ for each $j\in [r]\setminus \{i_0\}$ and $G'=G[V'_{i_0}]$. Thus, $e(G[V_{i_0}])\ge e(G')=f(k-1, k-1)$. It follows from \eqref{sum_e(V_i)} that $G\cong G_{n, k ,r}\in \mathcal{G}_{n, k ,r}$.

Case 2. $1\le\sum_{j\in [s]\setminus \{i\}}\nu(G'[V_j'])\le k-2$ for each $i\in [s]$. Clearly, there exists an $i_0\in[r]$ such that $\nu(G'[V_{i_0}'])=1$; otherwise, we get a contradiction by Lemma \ref{main3}$(\mathrm{iii})$. Without loss of generality, suppose that $\nu(G'[V'_1])=1$.  Then, we have
\begin{align*}\label{EQUATION:e(G')<f(k-1,k-1)k-1}
\sum_{i=1}^se(G'[V'_i])&\leq \sum_{2\leq i\leq s}f\left(\nu(G'[V'_{i}]), k-1\right) + f(1, k-1) \notag\\
&\leq f\left(\sum_{2\leq i\leq s}\nu(G'[V'_{i}]), k-1\right)+ f(1, k-1) \notag\\
&\le f(k-2, k-1)+f(1, k-1) \notag\\
&\leq f(k-1, k-1),
\end{align*}
where the last inequality is strictly true for $k\geq 5$. This leads to a contradiction in view of \eqref{sum_e(V_i)}. It remains to consider the situation for $k\le4$.

If $k=3$, then $s=2$ and $G[V_1']\cong G[V_2']\cong K_3$.
This together with \eqref{sum_e(V_i)} yields that $G$ is a graph formed by the complete $r$-partite graph with classes $V_1,\ldots , V_r$ embedding two triangles $x_1y_1z_1$ and $x_2y_2z_2$ in $V_1$ and $V_2$, respectively. Recall that $V_1,V_2,\ldots,V_r$ is a $k$-good partition of $G$. But $d_{G[V_1]}(x_1)+\nu \left(G[N_{V_2}(x_1)]\right)=3> k-1$, a contradiction to Definition \ref{defgood}(iii). If $k=4$, then $G[V'_1]\cong K_{1, 3}$ and $\sum_{i=2}^{s}\nu(G[V'_2])=2$.
As the same argument of the case $k=3$, we can find a vertex $x\in V_1$ such that $d_{G[V_1]}(x)+\sum_{i=2}^{s}\nu \left(G[N_{V_i}(x)]\right)=5> k-1$, a contradiction to Definition \ref{defgood} (iii). Thus, we complete the proof of Theorem \ref{th1}.
\end{proof}

\section{$H$-free graphs with large minimum degree}\label{Lemma}
In this section, we give a proof of Theorem \ref{th1weaker}. We first present the following useful lemmas given by Roberts and Scott \cite{robetscott2018}.
\begin{lem}[Roberts and Scott \cite{robetscott2018}]\label{Scott}
Let $F$ be a graph with a critical edge and $\chi(F) = r + 1 \geq 3$, and let $f(n) = o(n^2)$ be a function. If $G$ is an $F$-free graph with $n$ vertices and $e(G) \geq t_r(n) - f(n)$, then $G$ can be made $r$-partite by deleting $O(n^{-1}f(n)^{3/2})$ edges.
\end{lem}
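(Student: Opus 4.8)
The plan is to first invoke the classical (and far weaker) Erd\H{o}s--Simonovits stability theorem to reduce to a near-extremal configuration, and then pin down the sharp exponent by a double-counting argument driven by a greedy embedding of $F$. Concretely, I would fix a partition $V_1,\dots,V_r$ of $V(G)$ maximising the number of crossing edges. Since $G$ is $F$-free with $\chi(F)=r+1$ and $e(G)\ge t_r(n)-o(n^2)$, classical stability gives $b:=\sum_i e(G[V_i])=o(n^2)$, while maximality forces $|V_i|=(1/r+o(1))n$ for all $i$ and $d_{V_i}(v)\le d_{V_j}(v)$ for every $v\in V_i$ and every $j$. As $b$ is exactly the number of edges one deletes to make $G$ $r$-partite along this partition, it suffices to bound $b$. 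Write $c$ for the number of non-edges of $G$ between distinct parts, and for $v\in V_i$ set $\beta_v=d_{V_i}(v)$ (internal degree) and $\gamma_v=\sum_{j\ne i}(|V_j|-d_{V_j}(v))$ (``cross-deficiency''), so $\sum_v\beta_v=2b$ and $\sum_v\gamma_v=2c$. Comparing $e(G)$ with $\sum_{i<j}|V_i||V_j|$ and using convexity of $x\mapsto\binom x2$ yields the bookkeeping inequality $c\le f(n)+b$, and in particular $c=o(n^2)$.

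The engine is the fact that, because $F$ has a critical edge, $F$ is a subgraph of the graph obtained from $K_r(t)$ — the complete $r$-partite graph with all parts of size $t:=|V(F)|$ — by inserting one extra edge inside one part. From this I would extract two embedding facts, both proved by the same greedy scheme: start from an internal edge (which plays the role of the extra edge), adjoin $t-2$ further ``cross-rich'' vertices of that same part, and then fill the remaining $r-1$ parts vertex by vertex, at each step selecting a cross-rich vertex lying in the common neighbourhood (within the current part) of the $O(rt)$ already-chosen vertices; this never gets stuck because a cross-rich vertex excludes only few candidates from a part of size $\Theta(n)$, and because one only ever selects cross-rich vertices no localised clump of the $c$ missing cross-edges can derail the process. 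Fact (i): if an internal edge $ab$ has $\gamma_a,\gamma_b<\epsilon_0 n$ for a suitable constant $\epsilon_0=\epsilon_0(r,t)$, then $G\supseteq F$; hence in our $F$-free $G$ every internal edge satisfies $\gamma_a+\gamma_b\ge\epsilon_0 n$. Fact (ii): applying the scheme around a single vertex $v$ of large internal degree, using a cross-rich internal neighbour of $v$ as the other endpoint of the extra edge (here one exploits that, by maximality of the cut, $|N_{V_i}(v)|\ge\beta_v$ for every $i$), one gets $G\supseteq F$ whenever $\beta_v>C(\theta+c/\theta)$ for a threshold $\theta$ and constant $C=C(r,t)$, since at most $2c/\theta$ vertices fail to be $\theta$-cross-rich; optimising at $\theta=\Theta(\sqrt c)$ gives $\Delta(G[V_i])=\max_v\beta_v=O(\sqrt c)$.

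Combining the two facts by double counting,
\[
b\,\epsilon_0 n\ \le\ \sum_{ab\ \mathrm{internal}}(\gamma_a+\gamma_b)\ =\ \sum_v\beta_v\gamma_v\ \le\ \Big(\max_v\beta_v\Big)\sum_v\gamma_v\ =\ O(\sqrt c)\cdot 2c\ =\ O\!\left(c^{3/2}\right),
\]
so $b=O(n^{-1}c^{3/2})$. Plugging this into $c\le f(n)+b$ and using $c=o(n^2)$, so that $n^{-1}c^{3/2}=o(c)$, self-improves to $c=O(f(n))$, and therefore $b=O(n^{-1}f(n)^{3/2})$. Deleting these $b$ internal edges makes $G$ $r$-partite, which is exactly the assertion.

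The crux, and where I expect the real work, is the calibration of the threshold in Fact (ii): the greedy argument needs only a bounded number ($t$) of vertices from each remaining part, but it needs their common neighbourhood there to be non-empty, which forces every participating vertex to have cross-deficiency $o(n)$; meanwhile the number of vertices failing a cross-deficiency threshold $\theta$ is only $O(c/\theta)$, and it is precisely the tension between ``$\theta$ small, so each cross-rich vertex blocks few candidates'' and ``$\theta$ large, so few vertices are bad'' that forces $\theta\sim\sqrt c$ and hence the exponent $3/2$. The remaining effort is routine but must be handled with care: making the greedy embedding robust to a concentration of missing cross-edges, controlling the part sizes and the low-cross-deficiency thresholds uniformly in $n$, and carrying out the bookkeeping that converts ``$G$ has no copy of $F$'' into the clean numerical inequalities above.
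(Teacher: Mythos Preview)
The paper does not prove this lemma at all: it is quoted verbatim from Roberts and Scott \cite{robetscott2018} and used as a black box in the proof of Theorem~\ref{th1weaker}. So there is no ``paper's own proof'' to compare against.

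That said, your proposal is a sound and essentially complete reconstruction of the Roberts--Scott argument. The two facts you isolate are exactly the right engine: Fact~(i) (every internal edge has an endpoint of cross-deficiency $\ge \epsilon_0 n$) and Fact~(ii) ($\max_v\beta_v=O(\sqrt c)$, obtained by optimising the greedy threshold at $\theta\sim\sqrt c$) combine via $\sum_{ab}(\gamma_a+\gamma_b)=\sum_v\beta_v\gamma_v$ to give $b=O(n^{-1}c^{3/2})$, and the bootstrap through $c\le f(n)+b$ with $c=o(n^2)$ closes the loop cleanly. The only place that needs a word of care is the start of the greedy in Fact~(ii): the vertex $v$ itself is not assumed cross-rich, so one must work inside $N_{V_j}(v)$ (of size $\ge\beta_v$ by the max-cut property) rather than inside all of $V_j$; you have this right, and it is precisely why the bound comes out as $\beta_v\gtrsim\theta+c/\theta$ rather than $\beta_v\gtrsim n$.

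For context, the paper does prove the analogous statement for the suspension graph $H$ (Theorem~\ref{th1-stability}) by a different two-step scheme: first strip low-degree vertices (a set $L$ of size $O(f(n)/n)$), show the remaining graph is within $O(1)$ edges of $r$-partite, and then bound the internal degree of each deleted vertex by $O(f(n)^{1/2})$, so that reinserting $L$ adds at most $|L|\cdot O(f(n)^{1/2})=O(n^{-1}f(n)^{3/2})$ internal edges. Your double-counting route avoids the vertex-deletion step and gets the $3/2$ exponent in one shot, which is arguably cleaner.
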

\begin{lem}[Roberts and Scott \cite{robetscott2018}]\label{t_k^*}
Let $r\geq2$ and $t\geq 1$ be integers. Suppose that the graph $G\subseteq T_r(rn)$ is $T_r(rt)$-free. Then $e(G)\leq t_r(rn)-n^2/2$ for sufficiently large $n$.
\end{lem}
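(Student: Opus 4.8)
The plan is to exhibit the $k$-good partition by a stability argument followed by a cleanup of the parts driven entirely by the forbidden graph~$H$. Two elementary observations are used throughout. First, $\chi(H)=\chi(H_1)+1=r+1$; each petal $H_\ell+u$ is itself edge-critical of chromatic number $r+1$ (deleting the critical edge of $H_\ell$ leaves $(H_\ell-e_\ell)+u$ only $r$-chromatic); and each $H_\ell$, being $r$-chromatic with a critical edge $e_\ell$, embeds into any graph obtained from $T_{r-1}(M)$ with $M\ge|V(H_\ell)|$ by adding a single edge inside one part (send a proper $(r-1)$-colouring of $H_\ell-e_\ell$ into the parts injectively and route $e_\ell$ through the extra edge). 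Second, $G$ is $H$-free if and only if no neighbourhood $G[N_G(v)]$ contains pairwise disjoint copies of $H_1,\dots,H_k$; in particular no $G[N_G(v)]$ contains $k$ disjoint copies of $T_r(rt)$, where $t:=\max_\ell|V(H_\ell)|$.

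The partition is fixed as follows. From $\delta(G)\ge\frac{r-1}{r}n-k$ we get $e(G)\ge t_r(n)-O(n)$, and since $\chi(H)=r+1$ the Erd\H{o}s--Stone--Simonovits theorem gives $e(G)=t_r(n)+o(n^2)$; hence, by the Erd\H{o}s--Simonovits stability theorem, there is an $r$-partition $V_1,\dots,V_r$ of $V(G)$ with $\sum_i e(G[V_i])=o(n^2)$, which I take to maximise the number of crossing edges. This already forces $|V_i|=\tfrac nr+o(n)$ and $d_{V_i}(v)\le d_{V_j}(v)$ for $v\in V_i$; combined with the minimum-degree hypothesis and a routine cleanup that disposes of the boundedly many vertices of large in-part degree (shown below to force a copy of $H$, or else removable), I may assume every vertex meets only $o(n)$ \emph{defect} elements --- in-part edges and missing cross-edges --- so that for every $v\in V_i$ the graph induced on $N_G(v)\cap\bigcup_{j\ne i}V_j$ still contains $T_{r-1}(M)$ for some $M=\Omega(n)$. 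If there is no in-part edge, $G$ is $r$-partite and $(V_i)$ is trivially $k$-good; otherwise the nonempty defect structure already produces copies of the individual $H_\ell$ in $G$, and the finer control of dense neighbourhoods needed below is where Lemmas~\ref{Scott} and~\ref{t_k^*} enter: once a partial flower of $k-1$ petals sharing an apex $v$ has been located, $G[N_G(v)]$ minus that flower is $H_1$-free and hence near-$(r-1)$-partite by Lemma~\ref{Scott}, while Lemma~\ref{t_k^*} bounds a neighbourhood that is near-$r$-partite but $T_r(rt)$-poor.

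The engine of the proof is the following embedding step. Let $v\in V_i$. If $\bigcup_{j\ne i}G[N_G(v)\cap V_j]$ contains a matching of $k$ edges, then routing the critical edge of $H_\ell$ through the $\ell$-th edge of this matching and scattering the remaining vertices of $H_\ell-e_\ell$ over the $\Omega(n)$-sized parts of the $(r-1)$-partite structure inside $N_G(v)$ --- avoiding the $o(n)$ bad vertices and the $O(1)$ already-used ones --- embeds $H_1,\dots,H_k$ pairwise disjointly in $N_G(v)$, so $H\subseteq G$. A symmetric argument, with the apex taken in a part other than $V_i$, handles defect edges incident to $v$ inside $V_i$; and running the same scheme with $N_G(v)$ replaced by $\bigcup_{j\ne i}V_j$ handles matchings spread over whole parts. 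Taking contrapositives and iterating this step (each iteration decreasing $\sum_i e(G[V_i])$), $H$-freeness forces $\Delta(G[V_i])$, $\sum_{j\ne i}\nu(G[V_j])$ and $d_{V_i}(v)+\sum_{j\ne i}\nu(G[N_G(v)\cap V_j])$ all to be bounded by absolute constants, and in fact $\sum_i e(G[V_i])=O(1)$.

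It remains to sharpen these constants to exactly $k-1$, so that conditions~$(\mathrm{i})$, $(\mathrm{ii})$ and $(\mathrm{iii})$ of Definition~\ref{defgood} hold. This is where Theorem~\ref{fnvdelta} and the Abbott--Hanson--Sauer description of the extremal configurations are used: if $\Delta(G[V_i])\ge k$, or $\sum_{j\ne i}\nu(G[V_j])\ge k$, or $d_{V_i}(v)+\sum_{j\ne i}\nu(G[N_G(v)\cap V_j])\ge k$ for some $v$, then the relevant in-part edge set contains --- possibly after routing one petal through a vertex of in-part degree $\ge k$ --- a supply of $k$ usable disjoint edges, and the embedding step above produces a copy of $H$. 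Hence conditions~$(\mathrm{i})$--$(\mathrm{iii})$ hold, and $V_i\ne\emptyset$ since $|V_i|=\Omega(n)$, so $(V_i)$ is $k$-good. I expect this quantitative endgame to be the main obstacle: one has to route all $k$ petals simultaneously through a prescribed matching of defect edges while keeping the copies vertex-disjoint and inside a single neighbourhood, account correctly for the interaction between defect edges incident to a vertex and those lying in other parts (the $d_{V_i}(v)$ term), handle the vertices of large in-part degree left over from the cleanup, and treat by hand the cases $k\le4$, where the inequality $f(k-2,k-1)+f(1,k-1)\le f(k-1,k-1)$ that underlies the argument is not strict.
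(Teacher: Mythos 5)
Your proposal does not address the statement at hand. The statement to be proved is Lemma~\ref{t_k^*}: if $G\subseteq T_r(rn)$ contains no copy of $T_r(rt)$, then $e(G)\le t_r(rn)-n^2/2$ for large $n$ --- a purely quantitative fact about subgraphs of the Tur\'an graph missing a complete $r$-partite pattern with parts of constant size. What you have written instead is an outline of a proof of Theorem~\ref{th1weaker} (that an $H$-free graph with $\delta(G)\ge\frac{r-1}{r}n-k$ admits a $k$-good partition): the suspension $H$, the minimum-degree hypothesis, the stability partition, the embedding of the petals $H_\ell$ through defect edges, and the endgame via Theorem~\ref{fnvdelta} all belong to that theorem, not to the lemma. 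Worse, your argument explicitly invokes Lemma~\ref{t_k^*} itself (``Lemma~\ref{t_k^*} bounds a neighbourhood that is near-$r$-partite but $T_r(rt)$-poor''), so read as a proof of that lemma it is circular.

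A proof of the actual statement needs a different, and much shorter, kind of argument: one must show that deleting fewer than $n^2/2$ edges from $T_r(rn)$ cannot destroy all copies of $T_r(rt)$ --- for instance, by observing that with so few deletions each part retains many vertices whose degree into every other part is $n-o(n)$, and then greedily (or via the bipartite K\H{o}v\'ari--S\'os--Tur\'an theorem applied between consecutive parts) selecting $t$ vertices per part that are pairwise completely joined across parts. The paper itself does not reprove the lemma; it quotes it from Roberts and Scott \cite{robetscott2018}. As it stands, your proposal proves nothing about the lemma and would need to be discarded and replaced by an argument of the above type.
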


We also need another easy lemma about edge-critical graphs.
\begin{lem}\label{u_iv_i}
Let $G$ be an edge-critical graph with $\chi(G)=r \ge 2$, and let $G^* = G+u$. If $v_1v_2$ is a critical edge of $G$, then both $uv_1$ and $uv_2$ are critical edges of $G^*$.
\end{lem}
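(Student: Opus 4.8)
The statement is that if $v_1v_2$ is a critical edge of an edge-critical graph $G$ with $\chi(G)=r$, then in $G^*=G+u$ both $uv_1$ and $uv_2$ are critical edges of $G^*$. First I would observe that $\chi(G^*)=r+1$: since $u$ is joined to every vertex of $G$, any proper colouring of $G^*$ restricts to a proper colouring of $G$ using colours different from that of $u$, so $\chi(G^*)\ge \chi(G)+1=r+1$; and extending an $r$-colouring of $G$ by a fresh colour on $u$ gives $\chi(G^*)\le r+1$. Hence being a critical edge of $G^*$ amounts to showing $\chi(G^*-uv_i)=r$ for $i\in\{1,2\}$.

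By symmetry it suffices to treat $uv_1$. The plan is to exhibit an $r$-colouring of $G^*-uv_1$. Start from a proper $(r-1)$-colouring $c$ of $G-v_1v_2$, which exists because $v_1v_2$ is a critical edge of $G$; in such a colouring necessarily $c(v_1)=c(v_2)$, since otherwise $c$ would already properly colour $G$ with $r-1$ colours, contradicting $\chi(G)=r$. Now I recolour $v_1$ with a brand-new colour, say colour $r$, and keep $c$ on all other vertices of $G$; this is a proper colouring of $G$ itself (the only edge of $G$ that could be violated under $c$ was $v_1v_2$, and now $v_1$ carries a colour used nowhere else). Finally assign colour $r$ to $u$ as well. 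I must check this is a proper colouring of $G^*-uv_1$: edges inside $G$ are fine by the previous sentence; edges $uv$ for $v\in V(G)\setminus\{v_1\}$ are fine because $v$ receives a colour in $\{1,\ldots,r-1\}$ while $u$ receives colour $r$; and the edge $uv_1$ has been deleted, so the clash between $u$ and $v_1$ (both colour $r$) does not matter. Thus $\chi(G^*-uv_1)\le r$, and since $G^*-uv_1$ still contains $G$ we have $\chi(G^*-uv_1)\ge\chi(G)=r$, giving equality. Hence $uv_1$ is a critical edge of $G^*$, and symmetrically so is $uv_2$.

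There is essentially no hard part here; the only thing one has to be a little careful about is the observation that in the $(r-1)$-colouring of $G-v_1v_2$ the endpoints $v_1,v_2$ are forced to share a colour, which is what makes room to introduce colour $r$ on just one of them without disturbing the rest of $G$. Everything else is a direct chromatic-number bookkeeping argument, and the symmetry between $v_1$ and $v_2$ disposes of the second claim at once.
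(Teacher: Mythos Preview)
Your proof is correct and follows essentially the same approach as the paper's. The paper phrases it as producing an $r$-partition $(V_1,\ldots,V_{r-1},\{v_1\})$ of $G$ directly (and then placing $u$ into the class $\{v_1\}$), whereas you reach the same colouring by first taking an $(r-1)$-colouring of $G-v_1v_2$ and then moving $v_1$ to a new colour; the underlying idea is identical.
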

\begin{proof}[\bf Proof]
By symmetry, it suffices to show that $uv_1$ is a critical edge of $G^*$.  Since $\chi(G)=r$ and $v_1v_2$ is a critical edge of $G$, there is a partition $(V_1,\ldots,V_r)$ of $G$ such that $V_i$ is an independent set of $G$ for each $i\in [r-1]$ and $V_r=\{v_1\}$. Let $G'$ be the graph obtained from $G^*$ by deleting the edge $uv_1$. Clearly, $(V_1,\ldots,V_{r-1},V_r\cup\{u\})$ is an $r$-coloring of $G'$. This means that $uv_1$ is a critical edge of $G^*$ in view of $\chi(G^*)=r+1$.
\end{proof}

Now, we are in a position to prove Theorem \ref{th1weaker}.
\begin{proof}[{\bf Proof of Theorem \ref{th1weaker}}]
Suppose that $H_j$ is an edge-critical graph with a critical edge $u_jv_j$ and $\chi(H_j)=r$ for each $j\in [k]$. Let $H := (H_1, H_2, \ldots, H_k)+u$ and $H_j^*=H_j+u$ for $j\in[k]$. By Lemma \ref{u_iv_i}, we have $uv_j$ is a critical edge of $H^*_j$. Then, by Theorem \ref{Simonovits}, there exists a constant $p_j$ (or $p^*_j$) such that $H_j$ can be embedded in $T_{r-1}((r-1)p_j) + e$ with $u_jv_j=e$ (or $H^*_j$ can be embedded in $ T_r(rp^*_j) + e$ with $uv_j=e$) for each $j\in [k]$, where $e$ is any edge inside a vertex class of $T_{r-1}((r-1)p_j)$ (or $T_r(rp^*_j)$).

Let $G$ be an $H$-free with maximum number of edges. This means that $G+e$ contains a copy of $H$ for any $e\notin E(G)$. It follows that $G$ contains a subgraph $D$ which is a copy of $H-e_0$ for some $e_0\in H$. Without loss of generality, we may assume that $e_0\in H^*_k$ and $v_0$ is the center vertex of $D$. Let $D'$ denote the subgraph which is a copy of $(H_1, \ldots, H_{k-1})+u$ in $D$. Note that $\delta(G) \geq \frac{r-1}{r} n - k$. Let $\ell=|V(D')|$. Choose a subset $S\subseteq N_G(v_0)-V(D')$ such that
\begin{equation}\label{|S|}
|S|=\frac{r-1}{r} n - k - \ell.
\end{equation}
Clearly, $G[S]$ is $H_k$-free as $G$ is $H$-free. We show that $G[S]$ is close to $T_{r-1}(|S|)$. Note that
\begin{equation}\label{delta(G[S])}
\delta(G[S]) \geq \delta (G)-(n-|S|) \geq \frac{r-2}{r}n - 2k - \ell=|S|-\frac{n}{r}-k.
\end{equation}
This implies that
\begin{align*}
e(G[S])&\geq \frac{|S|\left(|S|-\frac{n}{r}-k\right)}{2}\geq \frac{|S|(|S|+1)}{2}-\frac{|S|\left(\frac{n}{r}+k+1\right)}{2}\\
       &=\frac{r-2}{r-1}\frac{|S|(|S|+1)}{2}-\frac{(k+\ell-1)+(r-1)(k+1)}{2(r-1)}|S|.
\end{align*}
For simple, let $C_{k, \ell}=\frac{(k+\ell-1)+(r-1)(k+1)}{2(r-1)}$. Since $(1-\frac{1}{r-1})\frac{|S|(|S|+1)}{2}\geq t_{r-1}(|S|),$
we have
\begin{equation}\label{E(G[S])}
e(G[S])\geq t_{r-1}(|S|)-C_{k, \ell}|S|.
\end{equation}

For  a partition $(S_1, S_2, \ldots, S_{r-1})$ of $G[S]$, we define an $(r-1)$-partite graph
$$G_S[S_1, S_2, \ldots, S_{r-1}]=\left(S, \{v_{i}v_{i'}\in E(G) \, : \, v_{i}\in S_{i}, v_{i'}\in S_{i'}, 1\le i<i' \le r-1\}\right).$$
Now we partition $S$ into $(S_1, S_2, \ldots, S_{r-1})$ such that $e(G_S[S_1, S_2, \ldots, S_{r-1} ])$ is maximum. By Lemma \ref{Scott}, for some constant $c$
\begin{equation}\label{cn12}
\sum\limits_{1\leq i \leq r-1}e(G[S_i])\leq c |S|^{1/2} \leq cn^{1/2}.
\end{equation}
This together with \eqref{E(G[S])} implies that
\begin{equation}\label{|S_i|}
\left||S_i|-\frac{n}{r}\right|\leq \varepsilon_1 \frac{n}{r}
\end{equation}
for some $\varepsilon_1 \in(0, 10^{-6})$. Fix $i\in[r-1]$. For $x_i \in S_i$ and $i'\in [r-1]$ with $i'\neq i$, we have
\begin{equation}\label{dS_j|}
d_{S_{i'}}(x_i)\geq \delta(G[S])-\sum\limits_{ q \in [r-1]\setminus \{i, i'\}}|S_q|-cn^{\frac{1}{2}}\geq \frac{1}{r}n-2k-\ell-\frac{r-3}{r}\varepsilon_1n-cn^{\frac{1}{2}} \geq (1-\varepsilon_2)\frac{n}{r}
\end{equation}
for some $\varepsilon_2 \in(2\varepsilon_1, 10^{-5})$.

Now, we show that $E(G[S_i])$ is empty for each $i\in [r-1]$. Suppose that there exists an edge $uv\in E(G[S_1])$.  Pick $B_1 \subseteq (S_1-\{u, v\})$ with $|B_1|= \frac{n}{2r}$. By \eqref{|S_i|} and \eqref{dS_j|}, we have
$$|N_{S_i}(u)\cap N_{S_i}(v)|\geq (1-2(\varepsilon_1+\varepsilon_2))\frac{n}{r}\geq \frac{n}{2r}$$
for each $2\leq i\leq r-1$. We can pick a subset $B_i \subseteq \left(N_{S_i}(u)\cap N_{S_i}(v)\right)$ with $|B_i|=\frac{n}{2r}$ for each $2\leq i\leq r-1$. Let $B=\cup_{i\in[r-1]}B_i$. Recall that $G[S]$ is $H_k$-free. It follows from Theorem \ref{Simonovits} that $G_B\left[B_1, B_2, \ldots, B_{r-1}\right]$ is $T_{r-1}((r-1)p_k)$-free for some constant $p_k>0$. By Lemma \ref{t_k^*}, we have
$$e(G_B\left[B_1, B_2, \ldots, B_{r-1}\right]) \leq t_{r-1}\left(\frac{(r-1)n}{2r}\right)- \frac{n^2}{8r^2},$$
implying that there are at least $\frac{n^2}{8r^2}$ edges missing between vertex classes $S_i$ for $i\in [r-1]$. This together with \eqref{cn12} shows that
$$e(G[S])\leq t_{r-1}(|S|)+cn^{\frac{1}{2}}-\frac{n^2}{8r^2},$$ a contradiction to \eqref{E(G[S])}. Therefore $E(G[S_i])=\emptyset$ for each $i\in [r-1]$.

Since $e(G[S_i])=0$, we can further improve $|S_i|$ for each $i\in[r-1]$ by showing that
\begin{equation}\label{|S_i|up}
|S_i|\leq |S|-\delta(G[S]) \leq \frac{n}{r}+k
\end{equation}
in view of \eqref{|S|} and \eqref{delta(G[S])}. This further implies that
\begin{equation}\label{|S_i|blew}
|S_i|= |S|-\sum\limits_{ i' \in [r-1]\setminus \{i\}}|S_{i'}| \geq \frac{n}{r}-k(r-1)-\ell.
\end{equation}
Moreover, for $x\in S_i$ and $i'\in [r-1] \setminus \{i\}$, it follows from \eqref{delta(G[S])} and \eqref{|S_i|up} that
\begin{equation}\label{dS_jxin}
d_{S_{i'}}(x)\geq \delta(G[S])- \sum_{q \in[r-1] \setminus \{i, i'\} }|S_q| \geq \frac{n}{r}-k(r-1)-\ell.
\end{equation}

Recall that $D'$ denote the subgraph which is a copy of $(H_1, \ldots, H_{k-1})+u$ in $D$. We consider the vertices not in $S\cup V(D')$. Let $S_0=V(G)-S-V(D').$ Then
\begin{equation}\label{|s0|}
|S_0|=n-\ell-|S|= \frac{n}{r}+k.
\end{equation}
For $x_i\in S_i$ with $i \in [r-1]$, we have
\begin{equation}\label{dS_0}
d_{S_0}(x_i) \geq d_{G}(x_i)-(|S\cup V(D')|-|S_i|)\geq |S_i| \geq \frac{n}{r}-k(r-1)-\ell=|S_0|-kr-\ell.
\end{equation}
Let
\begin{equation}\label{ap}
a=kr+\ell, \, \, \,  p^*=\sum_{i \in [k]}p^*_i,
\end{equation}
and
$$S_0^*= \{ x\in S_0 : d_{S_0}(x) \geq p^*(r-1)a+k\}.$$
\begin{claim}\label{|S0*|}
$|S_0^*|\leq a(r-1)$.
\end{claim}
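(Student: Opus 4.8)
**Proof proposal for Claim \ref{|S0*|}.**

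The plan is to argue by contradiction: suppose $|S_0^*|> a(r-1)$. The intuition is that the set $S_0^*$ consists of vertices in $S_0$ with large degree back into $S_0$, so $G[S_0^*]$ is relatively dense; combined with the fact that every vertex of $S_0^*$ is highly connected to all of the classes $S_1,\dots,S_{r-1}$ (by \eqref{dS_0} and \eqref{dS_jxin}), one should be able to locate, greedily, inside $S_0^* \cup S_1 \cup \cdots \cup S_{r-1}$, disjoint copies of the ``Turán blow-ups'' $T_r(rp_j^*)+e$ that host the $H_j^*$'s. Since $H_k^* = H_k + u$ and $D'$ already realizes $(H_1,\dots,H_{k-1})+u$ with center $v_0$, what we really want is to find a copy of $H_k$ on a vertex set all of whose vertices are adjacent to $v_0$ — but more directly we build all $k$ suspended graphs simultaneously sharing center $v_0$, yielding a copy of $H$ and contradicting $H$-freeness of $G$.

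The key steps, in order, would be: (1) Note $v_0 \in V(D')$, and recall $|V(D')| = \ell$ while $|S_0^*| > a(r-1) \geq (r-1)\ell$ is much larger than $\ell$; in particular $v_0$ is adjacent to at least $|S_0^*| - \ell$ vertices of $S_0^*$ would be false in general, so instead one should note that $v_0$'s neighborhood was never assumed to meet $S_0$ — hence the correct route is: pick a vertex $w \in S_0^*$; since $d_{S_0}(w) \geq p^*(r-1)a + k$, the set $N_{S_0}(w)$ is large. (2) Show that within $N_{S_0}(w) \cup S_1 \cup \cdots \cup S_{r-1}$ one can greedily extract $k$ vertex-disjoint copies of $T_r(rp_j^*)$, each having one part inside $N_{S_0}(w)$ (using that $N_{S_0}(w)$ has size $\gg k r p^*$ after we subtract the $O(a)$ vertices of $S_0 \setminus S_0^*$, hmm) — actually cleaner: each part of each copy should lie in a distinct $S_i$ plus one extra part carved from a common large pool, and crucially a designated edge $e$ of the copy lands inside one part so that $H_j^*$ embeds via Simonovits. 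Then the shared vertex $u$ of the suspension must be pinned to a single global vertex; here one uses $w$ itself as the center by ensuring every part vertex is adjacent to $w$, which follows from $d_{S_i}(w)$ being almost $|S_i|$ (from \eqref{dS_0}-style bounds applied to $w \in S_0$) together with the definition of $S_0^*$ controlling $d_{S_0}(w)$. (3) Conclude $G \supseteq H$, contradiction; hence $|S_0^*| \leq a(r-1)$.

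The main obstacle I expect is step (2): carefully organizing the greedy embedding so that (a) the $k$ copies are vertex-disjoint, (b) each carries an edge inside one part enabling the Simonovits embedding of $H_j^*$ rather than just $T_r(rp_j^*)$, and (c) all used vertices are common neighbors of the single center vertex $w$. The degree estimates \eqref{|S_i|blew}, \eqref{dS_jxin}, \eqref{dS_0} give room of order $n/r$ in each class while we only need $p^* r$ vertices total, so quantitatively there is vast slack; the delicacy is purely bookkeeping — tracking which vertices have been used and verifying common-neighbor conditions survive the $\Theta(r k p^*)$ deletions. A secondary subtlety is that $S_0^*$ may not itself induce the needed edges, so one may instead need to work inside $S_0$ (size $n/r + k$) and use that $S_0 \setminus S_0^* $ has bounded degree inside $S_0$, forcing $e(G[S_0])$ to be controlled and thereby forcing structure; but the cleanest line is the greedy common-neighborhood argument centered at a single high-degree vertex of $S_0^*$, so I would pursue that first.
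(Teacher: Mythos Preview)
Your overall strategy --- pick a vertex of $S_0^*$ as the center and then greedily build the $k$ copies of $T_r(rp_j^*)+e$ inside its neighborhood --- matches the paper's. However, there is a genuine gap at the point you flag least: the assertion that for the chosen $w\in S_0^*$ one has ``$d_{S_i}(w)$ being almost $|S_i|$ (from \eqref{dS_0}-style bounds applied to $w\in S_0$)''. The inequalities \eqref{dS_jxin} and \eqref{dS_0} bound degrees \emph{from} $S_i$ (into other $S_{i'}$ or into $S_0$), not degrees \emph{from} $S_0$ into $S_i$. A vertex $w\in S_0$ is not in $S\subseteq N_G(v_0)$, so we have no control on $w$ beyond $\delta(G)\ge \tfrac{r-1}{r}n-k$; for $r\ge 3$ this is perfectly compatible with $d_{S_{i_0}}(w)=0$ for some $i_0$. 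Thus an \emph{arbitrary} $w\in S_0^*$ need not work, and your greedy embedding cannot get started.

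The paper closes this gap with a short pigeonhole argument that is, in fact, the reason the threshold is $a(r-1)$. By \eqref{dS_0} every vertex of $S_i$ has at most $a$ non-neighbors in $S_0$; hence among any $a+1$ vertices of $S_0^*$, each vertex of $S_i$ is adjacent to at least one, which forces all but at most $a$ vertices of $S_0^*$ to satisfy $d_{S_i}(\cdot)\ge |S_i|/(a+1)$. Discarding these $\le a$ bad vertices for each of the $r-1$ classes leaves, under the assumption $|S_0^*|\ge a(r-1)+1$, at least one vertex $v\in S_0^*$ with $d_{S_i}(v)\ge |S_i|/(a+1)$ for \emph{every} $i\in[r-1]$. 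With this $v$ in hand (and using $d_{S_0}(v)\ge p^*(r-1)a+k$ from the definition of $S_0^*$, which you do invoke correctly), the greedy construction you outline goes through: one picks the parts $Y_i^j\subseteq N_{S_i}(v)$, then the critical-edge endpoints $x_j\in N_{S_0}(v)\cap N(Y_j)$, then the remaining parts $Y_0^j\subseteq N_{S_0}(Y_j)$. So your plan is salvageable, but the missing pigeonhole step is not mere bookkeeping --- it is exactly what ties the contradiction to the bound $a(r-1)$.
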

\begin{proof}
Suppose that $|S_0^*|\geq a(r-1)+1$. For each $i\in [r-1]$, let
$$S_0^i=\left\{v\in S_0^* :  d_{S_i}(v)\geq \frac{|S_i|}{a+1}\right\}.$$
Notice that $d_{S_0}(x_i) \geq |S_0|-a$ for $x_i\in S_i$ by \eqref{dS_0} and \eqref{ap}.  If $X\subseteq S_0^*$ with $|X|=a+1$, then $S_i\subseteq \cup_{x\in X}N(x)$ for $i\in[r-1]$.
This implies that $|S_0^i|\geq |S_0^*|-a$. Thus, $\left|\cap_{i \in [r-1]}S_0^i\right|\geq |S_0^*|-a(r-1)\geq 1.$
We can choose a vertex $v\in S_0^*$ such that for each $i\in [r-1]$
\begin{equation}\label{v>S_i/a+1}
d_{S_i}(v)\geq \frac{|S_i|}{a+1}.
\end{equation}
In the following, we aim to find a copy of $H$ with a center vertex $v$.

For $i\neq i'\in[r-1]$ and $x\in S_i$, recall that $d_{S_{i'}}(x)\geq n/r-k(r-1)-\ell\geq |S_i|-a$ by \eqref{|S_i|up}, \eqref{dS_jxin} and \eqref{ap}. This together with \eqref{v>S_i/a+1} shows that
\begin{equation}\label{N_S_iY}
N_{S_i}(Y\cup \{v\})\geq N_{S_i}(v)-a|Y|\geq\frac{|S_i|}{a+1}-aC_Y,
\end{equation}
where $Y\subseteq S-S_i$ and $|Y|=C_Y$ is a constant.
For sufficiently large $n$, by \eqref{N_S_iY}, we can pick
$Y_1^1\subseteq N_{S_1}(v)$
with $|Y_1^1|= p^*_1,$
$Y_i^1\subseteq N_{S_i}(v)\cap \big(\cap_{i'\in[i-1]}N_{S_i}(Y_{i'}^1)\big)$
satisfying $|Y_i^1|= p^*_1$ successively for $2\leq i \leq r-1$ .  Fix $i$, we choose
$$Y_i^j\subseteq \left(N_{S_i}(v)\cap \left(\bigcap\limits_{ i'\in [i-1]}N_{S_i}(Y_{i'}^j)\right)\right)-\bigcap\limits_{j' \in [j-1]}Y_i^{j'}$$
satisfying $|Y_i^j|= p^*_j$ successively for  $2\leq j \leq k$.
For $j\in [k]$, let
$Y_j=\cup_{ i \in [r-1]}Y_i^j$.
Then $|Y_j|=(r-1)p^*_j$. Now, we find an $H^*_j$ in $G[Y_j\cup S_0]$. It follows from  \eqref{dS_0} that
$$\left|N\left(Y_j\right)\cap N_{S_0}(v)\right|\geq p^*(r-1)a+k- p^*_j(r-1)a\geq (p^*-p^*_j)(r-1)a+k.$$
Thus, for $j\in [k]$, we can pick $x_j\in N\left(Y_j\right)\cap N_{S_0}(v)$ such that $x_1, x_2, \ldots, x_k$ are pairwise distinct. Again by \eqref{dS_0}, for $j\in[k]$, we have
$$|N_{S_0}(Y_j)|\geq |S_0|-p^*_j(r-1)a= \frac{n}{r}+k-p^*_j(r-1)a.$$
This means that we can pick
$Y_0^1\subseteq N_{S_0}\left(Y_1\right)- \{v, x_1, x_2, \ldots, x_k\},$
with $|Y_0^1|=p^*_1$, and pick
$$Y_0^j\subseteq N_{S_0}\left(Y_j\right)- \left(\bigcup\limits_{ j' \in [j-1]}Y_0^j\cup\{v, x_1, x_2, \ldots, x_k\}\right)$$
with $|Y_0^j|=p^*_j$ for $j=2, 3, \ldots, k$ successively.

For $j\in[k]$, we have chosen $x_j \in N_{S_0}(v)$ and subsets $Y_j$, $Y_0^j$. It is easy to see that $G[Y_j\cup Y_0^j]$ contains a copy of $T_r(rp^*_j)$. This together with the choice of the edge $vx_j$ and Theorem \ref{Simonovits} shows that $H^*_j=H_j+u$ can be embedded in $G_{Y_j\cup Y_0^j}[Y_0^j, Y_1^j,\ldots, Y_r^j]+vx_j$ such that $u=v$. Thus, we can get a copy of $H$ in $G$, a contradiction.
\end{proof}

Now, we consider the vertices in $S_0$ with small degree. Let $Z_0=S_0-S_0^*$, $Z_i=S_i$ for $i\in[r-1]$ and
$Z= Z_0\cup Z_1 \cup \ldots \cup Z_{r-1}$.
It follows from  Claim \ref{|S0*|} that
\begin{equation}\label{|Z_0|}
\frac{n}{r}+k=|S_0|\geq |Z_0|\geq |S_0|- a(r-1) \geq \frac{n}{r}+k-a(r-1).
\end{equation}
By \eqref{|S_i|up}, \eqref{|S_i|blew} and \eqref{ap}, we have
\begin{equation}\label{|Z_i|}
\frac{n}{r}+k\geq |Z_i|=|S_i|\geq \frac{n}{r}-a+k
\end{equation}
for $i\in [r-1]$, and then
\begin{equation}\label{V(G)-Z}
\left|V(G)-Z\right|\leq \left|V(G)-\bigcup\limits_{i=0}^{r-1}S_i\right|+a(r-1)\leq \ell+a(r-1).
\end{equation}
Recall that $\delta (G) \geq \frac{r-1}{r} n - k$ and $d_{Z_0}(x)\leq p(r-1)a+k-1$ for $x\in Z_0$. This together with \eqref{|Z_i|} and \eqref{V(G)-Z} shows that for $x\in Z_0$ and $i\in [r-1]$,
\begin{align}\label{*}
d_{Z_i}(x) &\geq d_G(x)-\left|V(G)-Z\right|- d_{Z_0}(x)-\sum\limits_{ i' \in [r-1]\setminus \{ i\} }|Z_{i'}| \notag\\
           &\geq \frac{r-1}{r}n - k - \ell-a(r-1)-(p(r-1)a+k-1)-(r-2)\left(\frac{n}{r}+k\right)\notag\\
           &\geq \frac{n}{r}-\left((p+1)(r-1)+1\right)a-p(r-1)k\notag\\
           &\geq \frac{n}{r}-2\left((p+1)(r-1)+1\right)a.
\end{align}
For every $i\in [r-1]$ and every $x\in Z_i$, by Claim \ref{|S0*|} and \eqref{dS_0}, we have
\begin{equation}\label{dZ_0}
d_{Z_0}(x)\geq d_{S_0}(x)-|S_0^*|\geq \frac{n}{r}-a+k-a(r-1)=\frac{n}{r}-ar+k.
\end{equation}

\begin{claim}\label{j(v)}
For every $x\in V(G)-Z$, there exists an $i=i(x)$ such that $d_{Z_i}(x)< k$. Moreover, such an $i$ is unique.
\end{claim}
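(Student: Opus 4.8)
The plan is to derive both parts from a short degree count; the existence part additionally uses Theorem~\ref{Simonovits} to exhibit a forbidden copy of $H$ centered at $x$. Throughout, set $W=V(G)\setminus Z$, so that $|W|\le \ell+a(r-1)=O(1)$ by \eqref{V(G)-Z}, and recall that $|Z_i|\le \tfrac{n}{r}+k$ for every $i$ by \eqref{|Z_0|} and \eqref{|Z_i|}, while $E(G[Z_i])=\emptyset$ for $i\in[r-1]$ and $\Delta(G[Z_0])=O(1)$ (since $Z_0=S_0\setminus S_0^{*}$). \emph{Uniqueness} is the easy half: if $d_{Z_i}(x)<k$ held for two indices $i$, then $d_G(x)\le\sum_{i=0}^{r-1}d_{Z_i}(x)+|W|<2k+(r-2)(\tfrac{n}{r}+k)+|W|$, which for large $n$ is strictly less than $\tfrac{r-1}{r}n-k\le\delta(G)$ --- a contradiction. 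Hence at most one such index exists.

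For \emph{existence}, suppose for contradiction that $d_{Z_i}(x)\ge k$ for every $i\in\{0,1,\dots,r-1\}$; we will build a copy of $H$ in $G$ with center vertex $x$. The same count with threshold $\tfrac{n}{3r}$ in place of $k$ shows that at most one class $Z_i$ can satisfy $d_{Z_i}(x)<\tfrac{n}{3r}$, since two such classes would force $d_G(x)<\tfrac{2n}{3r}+(r-2)(\tfrac{n}{r}+k)+|W|<\tfrac{r-1}{r}n-k$ for large $n$. Let $Z_{i_0}$ be a class minimizing $d_{Z_{i_0}}(x)$; then $d_{Z_i}(x)\ge\tfrac{n}{3r}$ for all $i\ne i_0$, while still $d_{Z_{i_0}}(x)\ge k$. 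Fix distinct vertices $w_1,\dots,w_k\in N_{Z_{i_0}}(x)$. For each $j\in[k]$ the aim is to find a vertex set $V^{(j)}$, with the sets $V^{(j)}$ pairwise disjoint (and meeting $\{x,w_1,\dots,w_k\}$ only in $x$ and $w_j$), such that $G[V^{(j)}]$ contains a copy of $T_r(rp^{*}_j)+xw_j$ in which $x$ and $w_j$ lie in one common class: the remaining $r-1$ classes are chosen one inside each $Z_i$ with $i\ne i_0$, and entirely inside $N(x)\cap N(w_j)$. This greedy choice goes through because $x$ has at least $\tfrac{n}{3r}$ neighbors in each $Z_i$ $(i\ne i_0)$; because $w_j$ misses only $O(a)$ vertices of each $Z_i$ $(i\ne i_0)$ by \eqref{*}, \eqref{dZ_0} and \eqref{dS_jxin}; because the bipartite graph between any two of the $Z_i$ has minimum degree at least $\tfrac{n}{r}-O(a)$ by the same estimates, while $E(G[Z_i])=\emptyset$ for $i\in[r-1]$ and $\Delta(G[Z_0])=O(1)$; and because only $O\big(k\sum_{j}p^{*}_j\big)$ vertices are used in total, which is negligible against all the above. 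Since $xw_j$ lies inside a vertex class of this $T_r(rp^{*}_j)$, Theorem~\ref{Simonovits} (in the form recorded earlier for $H^{*}_j$) embeds $H^{*}_j=H_j+u$ into $G[V^{(j)}]$ with $xw_j$ as its critical edge; as $x$ and $w_j$ play symmetric roles in $T_r(rp^{*}_j)+xw_j$, we may assume $u$ is mapped to $x$. Gluing these $k$ copies of $H^{*}_j$ along their common vertex $x$ produces a copy of $H$ in $G$, contradicting $H$-freeness.

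\emph{The main obstacle} is really just the pigeonhole step: turning $\delta(G)\ge\tfrac{r-1}{r}n-k$, together with $|Z_i|\le\tfrac{n}{r}+k$ and $|W|=O(1)$, into the assertion that $x$ has $\Omega(n)$ neighbors in all but at most one class. After that, building the $k$ vertex-disjoint copies of $H^{*}_j$ inside the near-complete $r$-partite host $G[Z]$ is routine; the only real care is the bookkeeping needed to keep the copies disjoint and to place the apex $u$ at $x$ rather than at $w_j$.
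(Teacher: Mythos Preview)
Your uniqueness half is essentially the paper's own argument. For existence, however, the paper takes a different route: rather than directly building a copy of $H$, it selects large blocks $Y^j_i\subseteq Z_i$ of size $\Theta(n)$ and observes that if each $r$-partite block $G_{Y^{j}}[Y^{j}_0,\dots,Y^{j}_{r-1}]$ contained a $T_r(rp^*_{j})$ then $H$ (centered at $v$) would appear; hence by $H$-freeness some block is $T_r(rp^*_{j_0})$-free, and Lemma~\ref{t_k^*} forces $\Omega(n^2)$ missing cross-edges in $G[Z_0,\dots,Z_{r-1}]$, contradicting the lower bound $t_r(n)-O(n)$ computed from \eqref{dS_jxin}, \eqref{|Z_i|} and \eqref{dZ_0}. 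Your direct-embedding approach is closer in spirit to the paper's proof of Claim~\ref{|S0*|} and is valid, but one step is left imprecise: you describe only the $r-1$ classes lying in the $Z_i$ with $i\ne i_0$, whereas the class containing $x$ and $w_j$ must also be filled out to size $p^*_j$ to literally obtain $T_r(rp^*_j)+xw_j$. The fix is easy: either add $p^*_j-2$ further vertices from $Z_{i_0}$ to that class (each such vertex is near-complete to every $Z_i$, $i\ne i_0$, by \eqref{dS_jxin}, \eqref{*}, \eqref{dZ_0}, so the greedy still succeeds), or invoke Lemma~\ref{u_iv_i} to note that $H^*_j-uv_j$ admits an $r$-colouring in which $\{u,v_j\}$ is a full colour class, so a class of size two already suffices for the embedding. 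The paper's density route sidesteps this bookkeeping at the cost of the auxiliary edge-count; your route is more constructive and arguably cleaner once that class is handled.
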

\begin{proof}
Suppose that there exists a vertex $v\in V(G)-Z$ such that $d_{Z_i}(v)\geq k$ for each $i\in\{0\}\cup[r-1]$. Without loss of generality, let $d_{Z_0}(v)=$ $\min \{$$d_{Z_i}(v)\,:\, 0 \leq i \leq r-1 \}$. By the pigeonhole principle, we have
$d_{Z_0}(v)\leq d_{G}(v)/r$.
Thus, for $i \in [r-1]$, we have
\begin{align}\label{N_{Z_i}(v-)}
d_{Z_i}(v) &\geq d_G(v)-\left|V(G)-Z\right|-d_{Z_0}(v)-\sum_{ i' \in [r-1] \setminus \{i\}}d_{Z_{i'}}(v) \notag\\
           &\geq d_G(v)-(\ell+a(r-1))-\frac{d_{G}(v)}{r}-\sum_{i' \in [r-1] \setminus \{i\}}\left|Z_{i'}\right| \notag\\
           &\geq \frac{r-1}{r}d_G(v)-\frac{r-2}{r}n-ar \geq \frac{n}{2r^2}.
\end{align}
Now, we construct $k$ $r$-partite graphs. Recall that $|N_{Z_0}(v)|\geq k$. We can pick $k$ distinct vertices $x_1, x_2, \ldots, x_k$ in $N_{Z_0}(v)$ and choose $k$ pairwise disjoint subsets $Y_0^1$, $Y_0^2$, $\ldots,$ $Y_0^k$ in $Z_0 -\{x_1, x_2, \ldots, x_k\}$ with $|Y_0^j|= \frac{n}{4kr^2}$ for $j\in[k]$. By \eqref{|Z_i|}, \eqref{*} and \eqref{N_{Z_i}(v-)}, we have
\begin{equation}\label{n/4r^2}
\left|N_{Z_i}(v)\cap N_{Z_i}(x_j)\right|\geq d_{Z_i}(v)-(|Z_i|-d_{Z_i}(x_j))\geq \frac{n}{4r^2}.
\end{equation}
For $i\in[r-1]$, we can choose $k$ pairwise disjoint $Y_i^1$, $Y_i^2$, $\ldots$, $Y_i^k$ such that $Y_i^j\subseteq N_{Z_i}(v)\cap N_{Z_i}(x_j)$ and $|Y_i^j|=\frac{n}{4kr^2}$ for $j\in[k]$. This is possible. Since we can choose $Y_i^1 \subseteq N_{Z_i}(v)\cap N_{Z_i}(x_1)$. Suppose that $Y_i^1, \ldots, Y_i^{j'-1}$ have been chosen. Due to \eqref{n/4r^2}, we choose
$$Y_i^{j'}\subseteq (N_{Z_i}(v)\cap N_{Z_i}(x_j'))\setminus \bigcup\limits_{j\in [j'-1]}Y_i^j.$$
Let $Y^j=\cup_{i=0}^{r-1} Y_i^j$ for $j\in[k]$. Then, we obtain $k$ $r$-partite graphs
$G_{Y^j}[Y_0^j, Y_1^j,\ldots,Y_{r-1}^j]$.

Since $G$ is $H$-free, there exists $j_0\in [k]$ such that $G_{Y^{j_0}}[ Y_0^{j_0}, Y_1^{j_0}, \ldots, Y_{r-1}^{j_0}]$ is $T_{r}(rp^*_{j_0})$-free by Theorem \ref{Simonovits}. Thus, by Lemma \ref{t_k^*}, we have
$$e(G_{Y^{j_0}}[ Y_0^{j_0}, Y_1^{j_0}, \ldots, Y_{r-1}^{j_0}])\leq t_{r}\left(\frac{n}{4kr}\right)- \frac{n^2}{32k^2r^4}.$$
This means that
\begin{equation}\label{e(G[Z_0v, z_1...])}
e\left(G_{Z\cup \{v\}}\left[Z_0\cup \{v\}, Z_1,\ldots, Z_{r-1}\right]\right) \leq t_r(n)- \frac{n^2}{32k^2r^4}.
\end{equation}
On the other hand, by \eqref{dS_jxin}, \eqref{|Z_i|} and \eqref{dZ_0}, we have
\begin{align*}
e\left(G_{Z\cup \{v\}}\left[Z_0\cup \{v\}, Z_1,\ldots, Z_{r-1}\right]\right) &\geq e\left(G_Z\left[Z_0, Z_1,\ldots, Z_{r-1}\right]\right)=\sum\limits_{x
                                                           \in S}d_{Z_0}(x)+\sum\limits_{i=1}^{r-2}\sum_{\substack{i<i' \leq r-1 \\ x \in S_{i'}}} d_{S_{i}}(x)\\
                                                           &\geq \left(\frac{n}{r}-ar\right)\sum\limits_{i=1}^{r-1}i|S_i|= t_{r}(n)-\frac{2(a(r+1)-k)}{r}n,
\end{align*}
a contradiction to \eqref{e(G[Z_0v, z_1...])}.

Now, we prove the uniqueness of $i=i(x)$ for $x\in V(G)-Z$. Suppose that there exists $x\in V(G)-Z$ and $i, i'\in\{0\}\cup[r-1]$ such that both $d_{Z_{i_1}}(x)$ and $d_{Z_{i_2}}(x)$ are less than $k$. This means $Z_{i_1}\cup Z_{i_2}$ has at least $|Z_{i_1}|+|Z_{i_2}|-2k+2$ vertices that are not adjacent to $x$. Thus
$d_G(x)\leq n-\left(|Z_{i_1}|+|Z_{i_2}|-2k\right)< (1-1/r)n-k$ in view of \eqref{|Z_i|}, a contradiction.
\end{proof}

By Claim \ref{j(v)}, for each $x\in V(G)-Z$, there is a unique $i=i(x)$ such that $d_{Z_i}(x)< k$. We can put $x$ in $Z_{i(x)}$. Then, we get an $r$-partition $(V_0, V_1, \ldots, V_{r-1})$ of $G$ with $Z_i\subseteq V_i$ for $i\in\{0\}\cup[r-1]$.
For $x\in V(G)$, we consider the degree of $x$ in $V_i$ with $x \notin V_i$. Suppose first that $x\in V_{i(x)}$ for $x\in V(G)-Z$. For $0\leq i \leq r-1$ with $i\neq i(x)$, by \eqref{|Z_0|}, \eqref{|Z_i|} and \eqref{V(G)-Z},
\begin{equation}\label{d_V_j(x)j(x)}
d_{V_i}(x)\geq d_G(x)-d_{Z_{i(x)}}(x)-|V(G)-Z|-\sum_{\substack{0\leq i'\leq r-1 \\ i'\notin \{i, i(x)\}}}|Z_{i'}|\geq \frac{n}{r}-ar-k.
\end{equation}
Then, we bound $d_{V_{i'}}(x)$ for $x\in V_{i}$ and $i\neq i'$. For $i\in\{0\}\cup[r-1]$, it follows from \eqref{|Z_0|} and \eqref{|Z_i|} that
\begin{equation}\label{|V_i|}
\frac{n}{r}+k +\ell + a(r-1)\geq |Z_i|+|V(G)-Z| \geq |V_i|\geq |Z_i|\geq \frac{n}{r}+k-a(r-1).
\end{equation}
Let $x\in V_i$ and $i'\neq i$, $0 \leq i' \leq r-1$. Combining \eqref{*}, \eqref{dZ_0} and \eqref{d_V_j(x)j(x)},
\begin{equation}\label{d_V_j(x)inoteqj}
d_{V_{i'}}(x)\geq d_{Z_{i'}}(x)\geq \frac{n}{r}-2\left((p+1)(r-1)+1\right)a.
\end{equation}
Let $b_1=k +\ell + a(r-1)$ and $b_2=2\left((p+1)(r-1)+1\right)a$. Fixing $i\in\{0\}\cup[r-1]$, \eqref{|V_i|} and \eqref{d_V_j(x)inoteqj} can be reduced to
\begin{equation}\label{|V_i|jian}
\frac{n}{r}+b_1 \geq |V_i|\geq \frac{n}{r}+k-a(r-1)
\end{equation}
and
\begin{equation}\label{d_V_j(v_i)}
d_{V_{i'}}(x)\geq \frac{n}{r}-b_2 \geq |V_{i'}|-(b_1+b_2)
\end{equation}
for $x\in V_i$ and $i'\neq i$, $0 \leq i' \leq r-1$. By \eqref{|V_i|jian} and \eqref{d_V_j(v_i)}, we have
\begin{align}\label{**}
e\left(G\left[V_0, V_1, \ldots, V_{r-1}\right]\right)&=\sum\limits_{i=0}^{r-2}\sum_{\substack{i<i' \leq r-1 \\ x \in V_{i'}}} d_{V_i}(x)\geq\left(\frac{n}{r}-b_2\right)\sum\limits_{i=1}^{r-1}i|V_i|\notag\\
&= t_{r}(n)-\frac{2(a(r-1)-k+b_2)}{r}n.
\end{align}
In what follows, we prove that $(V_0, V_1, \ldots, V_{r-1})$ is a $k$-good partition of $G$.

First, we show that $(V_0$, $V_1, \ldots, V_{r-1})$ satisfies $(1)$ of Definition \ref{defgood}. Note that $V_i\neq \emptyset$ for $0\leq i \leq r-1$ by \eqref{|V_i|}. If $\Delta(G[V_i])\geq k$ for some $i\in \{0, 1, \ldots, r-1\}$, then we can choose $x\in V_i$ and $x_1, x_2, \ldots, x_k$ in $N_{V_i}(x)$. As in the proof of Claim \ref{j(v)}, we can find $k$ $r$-partite graphs and one of them is $T_r(rp^*_j)$-free for some $j\in [k]$ by Theorem \ref{Simonovits}. Thus, by Lemma \ref{t_k^*}, we have
\begin{equation}
e\left(G\left[V_0, V_1, \ldots, V_{r-1}\right]\right)\leq t_r(n)-\varepsilon n^2
\end{equation}
for some $\varepsilon>0$, a contradiction to \eqref{**}.

Then, we show that $(V_0$, $V_1, ldots, V_{r-1})$ satisfies $(2)$ of Definition \ref{defgood}. Otherwise, by symmetry, suppose that
$\sum_{i \in [r-1] }\nu(G[V_i])\geq k$.
Let $x_1y_1, x_2y_2, \ldots, x_ky_k$ be the matching $M$ of $G$ with $x_jy_j\in V_{i(x_jy_j)}$ for some $i(x_jy_j)\in [r-1]$. We use $M$ to find $k$ $(r-1)$-partite graphs. By \eqref{|V_i|jian} and \eqref{d_V_j(v_i)}, we have
$$\left| \bigcap\limits_{j\in[k]}\left(N_{V_0}(x_j)\cap N_{V_0}(y_j)\right) \right|\geq |V_0|-2k(b_1+b_2)\geq 0.$$
Choose a vertex $v\in \cap_{j\in[k]}(N_{V_{i_0}}(x_j)\cap N_{V_{i_0}}(x_j))$. For $j\in [k]$ and $q\in [r-1]$ with $q\neq i(x_jy_j)$, let
$X^j_q=  N_{V_q}(v)\cap N_{V_q}(u_j) \cap N_{V_q}(v_j).$
Clearly, $|X^j_q|\geq n/r-3(b_1+b_2)\geq  n/(2r)+2k$ by \eqref{d_V_j(v_i)}.
We first choose $r-2$ subsets $Y_q^1$ with $q\in[r-1]$ and $q\neq i(x_1y_1)$ such that $Y_q^1 \subseteq X_q^1$ and $|Y_q^1|=\frac{n}{2kr}$. Then, for $j\in \{2, 3, \ldots, k\}$, we choose $r-2$ subsets $Y_q^j$ with $q\in[r-1]$ and $q\neq i(x_jy_j)$ such that $Y_q^j \subseteq X^j_q \backslash \bigcup\limits_{s\in[j-1]}Y_q^s$ and $\left|Y^j_q\right|= \frac{n}{2kr}$. Finally, we choose
$$Y^j_{i(x_jy_j)}\subseteq V_{i(x_jy_j)}(v)\backslash \big(\{x_j, y_j\}\cup \big( \bigcup\limits_{ s \in [k]\setminus \{j\}}Y^s_{i(x_jy_j)}\big)\big)$$
with $|Y^j_{i(x_jy_j)}|=\frac{n}{2kr}$ for $j\in [k]$. Let $Y^j = \cup_{i\in[r-1]} Y^j_i$ for $j\in[k]$. Then, we obtain $k$ $(r-1)$-partite graphs
$G_j=G_{Y^j}[Y^j_1, Y^j_2, \ldots, Y^j_{r-1}]$.
Since $G$ is $H$-free, there exists some $j_0\in [k]$ such that $G_{j_0}$ is $T_{r}(rp_{j_0})$-free by Theorem \ref{Simonovits}. Thus, by Lemma \ref{t_k^*}, we have
$$e(G_{j_0})\leq t_{r}\left(\frac{n}{2k}\right)- \frac{n^2}{8k^2r^2}.$$
This means that
$$e\left(G[V_0, V_1,\ldots, V_{r-1}] \right)\leq t_{r}(n)-\frac{n^2}{8k^2r^2},$$
a contradiction to \eqref{**}.

In the end, we show that $(V_0, V_1, \ldots, V_{r-1})$ satisfies $(3)$ of Definition \ref{defgood}. Otherwise, by symmetry, suppose that there exists $v\in V_0$ such that
$d_{V_0}(v)+\sum_{i \in [r-1] } \nu \left(G[N_{V_i}(v)]\right)\geq k.$
Thus, we can pick $z_1, z_2, \ldots, z_s$ in $N_{V_0}(v)$ and $(k-s)$-matching  $x_{s+1}y_{s+1}$, $x_{s+2}y_{y+2}$, $\ldots$, $x_{k}y_{k}$ in $\cup_{i \in [r-1]}G[N_{V_i}(v)]$ such that $x_j$ and $y_j$ are in the same vertex class $V_{i(x_jy_j)}$ for $s+1 \leq j \leq k$.
As the same methods used to verify $(1)$ and $(2)$ of Definition \ref{defgood}, we can show that
$e(G[V_0, V_1, \ldots, V_{r-1}])\leq t_r(n)-\varepsilon n^2$
for some $\varepsilon >0$ by finding $s$ $k$-partite graphs $Y_1, Y_2, \ldots, Y_s$ with $v_j\in V(Y_j)$, and $(k-s)$ $(r-1)$-partite graphs $Y_{k-s+1}, \ldots, Y_k$ with $x_j,y_j\in V(Y_j)$, a contradiction to \eqref{**}.
Thus, we conclude that $(V_0, V_1, \ldots, V_{r-1})$ is a $k$-good partition of $G$, completing the proof of Theorem \ref{th1weaker}.
\end{proof}

\section{Stability results for the suspension of edge-critical graphs}
In this section, we prove Theorem \ref{th1-stability}. We first present the following fundamental result, the Erd\H{o}s-Simonovits Stability Theorem, used in our proof.
\begin{thm}[Erd\H{o}s-Simonovits Stability Theorem \cite{Erdos1966, Simonovits1974}]\label{erdossimonovits}
Let $r\geq 2$ and suppose that $F$ is a graph with $\chi (F) =r + 1$. If $G$ is an $F$-free graph with $e(G)\geq t_r(n)-o(n^2)$, then $G$ can be formed from $T_r(n)$ by adding and deleting $o(n^2)$ edges.
\end{thm}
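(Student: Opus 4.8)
The plan is to prove the following quantitative version, which is the stated theorem with the $o(\cdot)$'s unwound: for every $\varepsilon>0$ there are $\delta>0$ and $n_0$ so that every $F$-free graph $G$ on $n\ge n_0$ vertices with $e(G)\ge t_r(n)-\delta n^2$ has an $r$-partition $V(G)=U_1\cup\cdots\cup U_r$ with at most $\varepsilon n^2$ edges inside the parts. This suffices: writing $T'$ for the complete $r$-partite graph with classes $U_1,\dots,U_r$, we get $\sum_{1\le i<j\le r}|U_i||U_j|\ge e(G)-\varepsilon n^2\ge t_r(n)-(\delta+\varepsilon)n^2$, and since $\sum_{i<j}|U_i||U_j|=\tfrac{1}{2}\bigl(n^2-\sum_i|U_i|^2\bigr)$ is maximized by the balanced partition, this forces each $|U_i|$ to differ from $n/r$ by $O(\sqrt{\delta+\varepsilon}\,n)$; hence $|E(G)\,\triangle\,E(T')|\le(\delta+2\varepsilon)n^2$, and matching the classes of a balanced Tur\'an graph to the $U_i$ gives $|E(T')\,\triangle\,E(T_r(n))|=O(\sqrt{\delta+\varepsilon})\,n^2$, so $G$ is within $O(\sqrt{\delta+\varepsilon})\,n^2=o(n^2)$ edges of $T_r(n)$.

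To build the partition I would invoke Szemer\'edi's Regularity Lemma. Apply it to $G$ with a small regularity parameter $\eta$, a density cutoff $d$, and a lower bound $M_0$ on the number of parts (to be fixed below, with $n_0$ large afterwards), obtaining $V_0\cup V_1\cup\cdots\cup V_t$ with $|V_0|\le\eta n$, $|V_1|=\cdots=|V_t|=L$ where $(1-\eta)n/t\le L\le n/t$, and at most $\eta t^2$ irregular pairs. Let $R$ be the reduced graph on $[t]$: $ij\in E(R)$ iff $(V_i,V_j)$ is $\eta$-regular with density $\ge d$. The crucial observation is that $R$ is $K_{r+1}$-free: since $\chi(F)=r+1$ we have $F\subseteq K_{r+1}(q)$ with $q=|V(F)|$, so a copy of $K_{r+1}$ in $R$ would, via the embedding (Key) lemma for regular pairs --- applicable because $L\ge(1-\eta)n/M(M_0,\eta)\to\infty$ and $\eta$ is chosen small relative to $d,q,r$ --- yield a copy of $K_{r+1}(q)$, hence of $F$, in $G$, a contradiction. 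Tur\'an's theorem \cite{turan1941} then gives $e(R)\le t_r(t)$. On the other hand, a routine edge count (edges meeting $V_0$: $\le\eta n^2$; edges inside clusters: $\le n^2/(2t)$; edges in irregular pairs: $\le\eta n^2$; edges in sparse regular pairs: $\le dn^2/2$; all remaining edges lie across pairs recorded by $E(R)$, contributing $\le e(R)L^2\le e(R)n^2/t^2$) together with $e(G)\ge t_r(n)-\delta n^2$ yields $e(R)\ge t_r(t)-ct^2$, where $c$ depends on $\delta,\eta,d,M_0,n_0$ only and can be made as small as we like.

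Next I would apply the classical stability form of Tur\'an's theorem for $K_{r+1}$ (which, to be self-contained, admits a short proof by induction on $r$ or by Zykov symmetrization together with Tur\'an's theorem): there is a function $g$ with $g(c)\to 0$ as $c\to 0$ such that the $K_{r+1}$-free graph $R$ admits a partition $[t]=W_1\cup\cdots\cup W_r$ with at most $g(c)t^2$ edges of $R$ inside the parts. Set $U_s=\bigcup_{i\in W_s}V_i$ for $s\in[r]$ and add $V_0$ to $U_1$. Counting the edges of $G$ interior to the $U_s$: those meeting $V_0$ contribute $\le\eta n^2$; those inside a single cluster contribute $\le n^2/(2t)$; an edge between two distinct clusters of a common $W_s$ lies in an irregular pair (total $\le\eta n^2$), in a sparse regular pair (total $\le dn^2/2$), or in a pair recorded by an $R$-edge inside some $W_s$ (total $\le g(c)t^2\cdot L^2\le g(c)n^2$). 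Hence the number of edges of $G$ interior to $U_1,\dots,U_r$ is at most $\bigl(2\eta+\tfrac{1}{2t}+\tfrac{d}{2}+g(c)\bigr)n^2$. Given $\varepsilon$, fix $d$ and then $\eta,M_0$ small enough that this quantity, for all sufficiently small $c$, is below $\varepsilon n^2$; then fix $\delta$ small and $n_0$ large so that the $c$ arising above is indeed that small. This produces the required $r$-partition and completes the proof.

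I expect the main difficulty to be purely organizational: threading the parameters so that $\varepsilon$ controls $d,\eta,1/M_0$ --- which must make both $2\eta+1/(2t)+d/2$ and $g(c)$ small --- which in turn controls $\delta$ and $n_0$, all while respecting the embedding-lemma constraint that $L$ be large relative to $\eta,d,q,r$; this last is where largeness of $n$ is spent. A secondary point is to quote (or prove in a few lines) the $K_{r+1}$-stability input without circularity, since the general Erd\H{o}s--Simonovits statement is precisely what is being established; isolating the $K_{r+1}$ case keeps this clean.
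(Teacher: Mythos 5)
Your proposal is essentially sound, but note that the paper offers no proof of this statement at all: Theorem \ref{erdossimonovits} is quoted as a black box from the cited works of Erd\H{o}s and Simonovits, so there is no in-paper argument to compare against. Your route --- Szemer\'edi regularity, a reduced graph $R$ that is $K_{r+1}$-free because $F\subseteq K_{r+1}(q)$ and the embedding lemma would otherwise produce $F$, an edge count showing $e(R)\ge t_r(t)-ct^2$, clique stability applied to $R$, and then lifting the partition of $R$ back to $V(G)$ --- is a standard modern proof of the stability theorem and the steps you outline (including the final deduction that a near-$t_r(n)$-sized $r$-partition forces near-balanced classes and hence small symmetric difference with $T_r(n)$) fit together correctly. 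It differs from the original Erd\H{o}s--Simonovits arguments, which avoid regularity and proceed by more elementary degree/counting and induction arguments; those give effective, polynomial-type dependence of the error on $\delta$ (which is what makes refinements such as Lemma \ref{Scott} possible), whereas your route hides tower-type constants inside the regularity lemma --- harmless for an $o(n^2)$ statement, but strictly weaker quantitatively. Two points you flag deserve emphasis: the clique-stability input for $R$ must indeed be proved independently (e.g.\ by induction on $r$ via a maximum-degree vertex, or by symmetrization), since otherwise the argument is circular, and the embedding step needs $\eta$ chosen small in terms of $d$, $q$ and $r$ before $n_0$ is taken large; with those supplied in full your sketch closes into a complete proof.
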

Now, we prove a weaker result as a stepping stone to Theorem \ref{th1-stability}.
\begin{lem}\label{Ofn}
Let $f(n)=o(n^2)$ be a function. Suppose that $G$ is an $H$-free graph on $n$ vertices such that $e(G)\geq t_{r}(n)-f(n).$ Then $G$ can be made $r$-partite by deleting $O(f(n))$ edges.
\end{lem}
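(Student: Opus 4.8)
The plan is to bootstrap the Erd\H{o}s--Simonovits stability theorem, using the approximate structure of extremal $H$-free graphs from Theorem~\ref{th1} to reduce the error term from $o(n^2)$ to $O(f(n))$, with a supersaturation argument tailored to the suspension structure. First I would set the stage. Since the centre vertex needs a fresh colour, $\chi(H)=r+1$, so with $e(G)\ge t_r(n)-f(n)\ge t_r(n)-o(n^2)$ Theorem~\ref{erdossimonovits} shows $G$ becomes $T_r(n)$ after changing $o(n^2)$ edges; fix an $r$-partition $V_1,\dots,V_r$ of $V(G)$ minimising the number $B:=\sum_i e(G[V_i])$ of \emph{bad} edges, so $B=o(n^2)$ and every $|V_i|=n/r+o(n)$, and minimality of $B$ forces $d_{V_i}(x)\le d_{V_j}(x)$ for all $x\in V_i$ and all $j$. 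Deleting a vertex keeps $G$ $H$-free, so by Theorem~\ref{th1} we have $\mathrm{ex}(n-1,H)=t_r(n-1)+O(1)\ge e(G)-\delta(G)$, which with $t_r(n)-t_r(n-1)=(1-\tfrac1r)n-O(1)$ gives $\delta(G)\ge(1-\tfrac1r)n-O(f(n))$ and $e(G)=t_r(n)+O(f(n))$. Finally, writing $M:=\sum_{i<j}(|V_i||V_j|-e(V_i,V_j))$ for the missing cross-edges, the identity $e(G)=\sum_{i<j}|V_i||V_j|-M+B$ together with $\sum_{i<j}|V_i||V_j|\le t_r(n)$ yields $M\le B+f(n)$, so it suffices to prove $B=O(f(n))$.

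The engine is the observation that if any vertex $v$ satisfies $G[N(v)]\supseteq K_r(C_0)$, the balanced complete $r$-partite graph with parts of size a suitable constant $C_0=C_0(H)$, then $H\subseteq G$: $K_r(C_0)$ accommodates $k$ vertex-disjoint copies of $H_1,\dots,H_k$ since each $H_j$ is $r$-colourable, and joining $v$ to all of them produces $H$. Hence for $H$-free $G$ every neighbourhood $G[N(v)]$ is $K_r(C_0)$-free, and since $\chi(K_r(C_0))=r$, Theorem~\ref{erdossimonovits} makes $G[N(v)]$ only $o(n^2)$-close to $(r-1)$-partite. To sharpen this, suppose $B\gg f(n)$. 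For $r\ge 3$ every bad edge $xy$ lies inside $N(v)$ for at least $d(x)+d(y)-n\ge 2\delta(G)-n=(1-\tfrac2r)n-O(f(n))=\Omega(n)$ vertices $v$, so double counting locates a vertex $v$ whose neighbourhood — itself close to $T_{r-1}(d(v))$ since $v$ meets $r-1$ classes almost fully — carries $\Omega(B)$ bad edges; inside this near-$T_{r-1}$ graph those bad edges either already yield a $K_r(C_0)$ (contradiction) or force $\Omega(B)$ additional missing cross-edges within $N(v)$, and summing the missing-edge count over all $v$ and comparing with $M$, the imbalance of the parts, and $e(G)\ge t_r(n)-f(n)$ drives $B=O(f(n))$. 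Reassigning each vertex to a part in which it has the fewest neighbours then realises $G$ as $r$-partite after deleting only $B=O(f(n))$ edges. The case $r=2$, where $H$ is the friendship graph $F_{k,3}$ and the bad-edge double counting degenerates, is handled by the analogous direct computation on the bipartition.

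The main obstacle is precisely this last sharpening: the soft input ``$G[N(v)]$ is $K_r(C_0)$-free'' only delivers an $o(n^2)$ error, and upgrading it to the quantitatively correct $O(f(n))$ requires using that $G$ avoids the whole suspension $H$ rather than merely a single $H_j$. A bad edge participates in a forbidden configuration only when it can be completed, inside a common neighbourhood, by a clean complete-$r$-partite gadget, so one must carefully propagate bad edges of $G$ to bad or missing edges inside neighbourhoods and balance these quantities against the imbalance of the parts and the global edge deficit; this bookkeeping, rather than any single clever move, is the technical heart of the lemma.
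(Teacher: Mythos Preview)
Your framework is reasonable and the structural observation that $K_r(C_0)\subseteq G[N(v)]$ forces $H\subseteq G$ is correct and central. However, the step deducing $B=O(f(n))$ is not just bookkeeping---it is the whole argument, and as sketched it does not close. Your double-counting shows that an average vertex $v$ sees $\Omega(B)$ bad edges inside $N(v)$; you then assert the dichotomy that these bad edges either assemble into $K_r(C_0)$ or force $\Omega(B)$ missing cross-edges within $N(v)$. Neither alternative is justified: bad edges inside a single class may form a large matching, which neither creates the $K_{C_0,C_0}$ needed between two parts of a $K_r(C_0)$ nor obviously forces proportionally many missing cross-pairs. Even granting the dichotomy, summing over $v$ and dividing by the at-most-$n$ neighbourhoods containing any fixed missing cross-pair yields only $M\ge cB$ for some unspecified constant $c$; combined with $M\le B+f(n)$ this is vacuous unless $c>1$, and you give no mechanism to secure that. (Your side remark that ``$v$ meets $r-1$ classes almost fully'' is also too optimistic: the minimality of the partition only gives $d_{V_i}(v)\le d(v)/r$, so $d_{V_j}(v)$ can be as small as roughly $n/r^2$.)

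The paper sidesteps this impasse by a different reduction. Instead of bounding $B$ on $G$ directly, it iteratively deletes vertices of degree below $(1-\delta)\tfrac{r-1}{r}|V|$ for a fixed small $\delta>0$; applying Theorem~\ref{th1} to the remainder shows only $O(f(n)/n)$ vertices are removed, costing $O(f(n))$ edges. The surviving graph $G'$ then enjoys a \emph{uniform} linear minimum-degree bound with no $f(n)$ slack, which makes all common-neighbourhood estimates clean. In $G'$ the paper proves each class carries at most $f(k-1,k-1)=O(1)$ bad edges: if some vertex had $k$ bad neighbours in its own class, or some class contained a $k$-matching, one builds $k$ pairwise disjoint balanced $r$-partite (respectively $(r-1)$-partite) subgraphs inside the relevant common neighbourhoods, and $H$-freeness forces one of them to be $T_r(rp^*_{j_0})$-free (respectively $T_{r-1}((r-1)p_{j_0})$-free), whence Lemma~\ref{t_k^*} produces $\Omega(n^2)$ missing cross-edges---far more than the $O(\varepsilon n^2)$ available. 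Thus $G'$ is $r$-partite after deleting $O(1)$ edges, and $G$ after $O(f(n))$. The vertex-deletion step is what buys the uniform degree control; your bound $\delta(G)\ge(1-\tfrac1r)n-O(f(n))$ is too weak to run the same embedding argument cleanly.
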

\begin{proof}[{\bf Proof}]
Let $\delta, \varepsilon, \eta \in [0, 10^{-4}r^{-6}k^{-4}]$ with $\varepsilon \leq \eta^2 /5$, $\gamma =100r^{2}k(\varepsilon^{1/2}+ \delta + \eta)$ and $\theta> 100r^3k(\varepsilon^{1/2}+ \delta + \eta)$. Suppose that $G$ is an $H$-free graph on $n$ vertices with $e(G)\geq t_{r}(n)-f(n)$. By Theorem \ref{erdossimonovits}, there exists some $N_0$ such that $G$ can be formed from $T_{r}(n)$ by adding and deleting $\varepsilon n^2$ edges for $n\geq N_0$. Suppose that $n \geq 2 N_0$. We say that $f$ is a \emph{``smaller" vertex} of $F$ if $d(f)\leq(1-\delta)\frac{r-1}{r}|V(F)|-1$. We first delete a ``smaller" vertex of $G$. As long as there is a ``smaller" vertex in the resulting graph, we delete it. We keep doing this until the remaining graph $G'$ ($G'$ maybe empty) has no such vertices. Let $L=V(G)-V(G')$.
\begin{claim}\label{CLAIM:|L|}
$|L|\leq (k^2+f(n))\left(\frac{2r}{\delta (r-1)}n^{-1} \right)=o(n)$.
\end{claim}
\begin{proof}
Note that $G'$ is $H$-free. Thus, $e(G')\leq t_r(n-|L|)+k^2$ by Theorem \ref{th1 }. 
It follows that
\begin{align}
e(G) 
     &\leq t_r(n-|L|)+k^2+\sum\limits_{i=|V(G')|+1}^{n}\left((1-\delta)\frac{r-1}{r}i-1\right) \notag\\
     &=t_r(n-|L|)+k^2+\sum\limits_{i=|V(G')|+1}^{n}\left(\frac{r-1}{r}i-1\right)-\sum\limits_{i=|V(G')|+1}^{n}\left(\delta\frac{r-1}{r}i\right) \notag \\
     &\leq t_r(n)+k^2-\sum\limits_{i=|V(G')|+1}^{n}\left(\delta\frac{r-1}{r}i\right)  \label{EQ:MAIN2_e(G<)(1)}\\
     &=t_r(n)+k^2-\delta\frac{r-1}{r}\left(n|L|-\frac{|L|^2}{2}+\frac{|L|}{2}\right)\label{EQ:MAIN2_e(G<)(2)}.
\end{align}
Note that the function $f(x)=(1/2+n)x-x^2/2$ is strictly monotone increasing on $[0, n]$. If $|L|\geq n/2$, then $f(|L|)\geq f(n/2)$. It follows from \eqref{EQ:MAIN2_e(G<)(2)} that
$$e(G)<t_r(n)+k^2-\delta\frac{r-1}{r}f(n/2)=t_r(n)+k^2-\delta\frac{r-1}{r}(3n^3/8+n/4).$$
This is a contradiction to $e(G)\geq t_r(n)-f(n)$ for sufficiently large $n$. Thus, $|L|< n/2$. In view of \eqref{EQ:MAIN2_e(G<)(1)}
$$e(G)\leq t_r(n)+k^2-|L|\left(\delta\frac{r-1}{r}\frac{n}{2}\right).$$
This together with $e(G)\geq t_r(n)-f(n)$ yields that $|L|\leq (k^2+f(n))\left(\frac{2r}{\delta (r-1)}n^{-1} \right)$.
\end{proof}

By Claim \ref{CLAIM:|L|}, we have
\begin{equation}\label{eG*}
e(G')\geq e(G)- |L|\left((1-\delta)\frac{r}{r+1}n\right)=e(G)-O(f(n))\geq t_{r+1}(n)-O(f(n))
\end{equation}
for sufficiently large $n$. So, it suffices to show that $G'$ can be formed from an $r$-partite graph by deleting at most $(r+1)f(k-1, k-1)=o(n)$ edges.


Note that $|V(G')|= n-|L|= (1- o(1))n\geq N_1$. Thus, we can partite $V(G')$ to $V'_1, V'_2, \ldots, V'_{r}$ such that there are at most $\varepsilon n^2$ edges within the vertex classes by Theorem \ref{erdossimonovits} and \eqref{eG*}. This implies that there are at most $3\varepsilon n^2/2$ edges missing between vertex classes $V'_1, V'_2, \ldots, V'_{r}$, i.e.,
\begin{equation}\label{eV1V2}
e(G'[V'_1, V'_2, \ldots, V'_{r}])\geq t_{r}(n)-\frac{3}{2}\varepsilon n^2 \geq t_{r}(|V(G')|)-\frac{3}{2}\varepsilon n^2.
\end{equation}
For every $v\in V(G')$, we have
\begin{align}\label{EQ:d_G'(v)}
d_{G'}(v)&\geq (1-\delta)\frac{r-1}{r}(n-|L|) \notag\\
         &\geq (1-\delta)\frac{r-1}{r}n-(1-\delta)\frac{r-1}{r}(k^2+f(n))\left(\frac{2r}{\delta r-1}n^{-1}\right)  \notag\\
         &= (1-\delta)\frac{r-1}{r}n-o(n)\geq (1-2\delta)\frac{r-1}{r}n.
\end{align}
Suppose that there exists some vertex $v$ having at least $k\eta n$ neighbours in every $V'_i$ for $i\in [r]$. For each $i\in [r]$, pick $F_i^1\subset N_{V'_i}(v)$
with $|F_i^1|=\eta n$, and pick
$F^j_i\subset N_{V'_i}(v) \setminus \cup_{1\leq j'<j}F_i^{j'}$
for each $j\in [k]\setminus \{1\}$ with $|F_i^j|=\eta n$. Let $Q_j=\cup_{i=1}^r F_i^j$.
Recall that $G'$ is $H$-free. Thus, there exists $j_0$ such that $G'_{Q_{j_0}}[F^{j_0}_1, F^{j_0}_2, \ldots, F_{r}^{j_0}]$ is $T_{r}(rp_{j_0})$-free. By Lemma \ref{t_k^*}, we have
$$e(G'_{Q_{j_0}}[F^{j_0}_1, F^{j_0}_2, \ldots, F_{r}^{j_0}])\leq t_{r}(r\eta n)-\frac{(\eta n)^2}{2},$$ implying that there are at least $(\eta n)^2/2$ edges missing between $V'_1, V'_2, \ldots, V'_{r}$, a contradiction to \eqref{eV1V2}.

Without loss of generality, we can suppose that every vertex in $G'$ has at most $k\eta n$ neighbours inside its own vertex class. This together with \eqref{EQ:d_G'(v)} shows that for each $v\in V'_i$ with $i\in [r]$
\begin{equation}\label{dV3i}
d_{V(G')\setminus V'_{i}}(v)\geq (1-2\delta)\frac{r-1}{r}n - \eta n.
\end{equation}
By \eqref{eV1V2}, it is easy to see that
\begin{equation}\label{2|V_i|}
|V'_i|\geq \frac{|V(G')|}{r}-(r-1)\sqrt{\frac{3}{2}\varepsilon}n \geq (1-2r\varepsilon^{1/2})\frac{n}{r}
\end{equation}
for each $i\in [r]$. On the other hand
\begin{equation}\label{2|V_i|up}
|V'_i|\leq n- (r-1)(1-2r\varepsilon^{1/2})\frac{n}{r}=(1+2r^2\varepsilon^{1/2} )\frac{n}{r}.
\end{equation}
If $e(G'[V_i])\geq f(k-1,k-1)$ for some $i\in[r]$, say $i=1$, then we can find a $k$-matching in $V_1'$ or find a vertex $v\in V'_1$ such that $d_{V'_1}(v)\geq k$. In the following, we divide our proof into the following
two cases.

Case 1. $V'_1$ contains a vertex $v$ such that $d_{V'_1}(v)\geq k$. Suppose that $v_1, v_2, \ldots, v_k$ are $k$ distinct vertices in $N_{V'_1}(v)$. By \eqref{dV3i} and \eqref{2|V_i|}, we have
\begin{align*}
|N_{V'_j}(v)\cap N_{V'_j}(v_i)|&\geq |V'_j|-2\left((n-|V'_1|)-\left((1-2\delta)\frac{r-1}{r}n - \eta n\right)\right)\\
&\geq \frac{n}{r}-\left(\frac{6r\varepsilon^{1/2}+4\delta(r-1)}{r}n+2\eta n\right)\\
&\geq (1-\gamma)\frac{n}{r}.
\end{align*}
Pick $F_1^1\subseteq V'_1\setminus \{v, v_1, v_2, \ldots, v_k\}$ with $|F_1^1|= (1-\gamma)\frac{n}{2kr}$, and pick $F_i^1\subseteq N_{V'_i}(v)\cap N_{V'_i}(v_1)$ with $|F_i^1|= (1-\gamma)\frac{n}{2kr}$ for $i\in [r]\setminus \{1\}$. Then, pick
$$F_1^j\subseteq V'_1 \setminus \left(\bigcup\limits_{j'< j}F_1^{j'}\cup \{v, v_1, v_2, \ldots, v_k\}\right)$$
and
$F_i^j\subseteq V'_i \setminus \cup_{j'< j}F_i^{j'}$
with $|F_i^j|= (1-\gamma)\frac{n}{2kr}$ for each $i\in [r]\setminus \{1\}$ and each $j\in [k]\setminus \{1\}$. Let $Q_j=\cup_{i=1}^{r}F_j^i$.
We obtain $k$ $r$-partite graphs $G_{Q_j}[F_1^j, F_2^j, \ldots, F_{r}^j]$ for $j\in [k]$. Since $G$ is $H$-free, there exists $j_0$ such that $G'_{Q_{j_0}}[ F_1^{j_0}, F_2^{j_0}, \ldots, F_{r}^{j_0}]$ is $T_{r}(rp_{j_0})$-free. Thus, by Lemma \ref{t_k^*}
$$e\left(G'_{Q_{j_0}}[ F_1^{j_0}, F_2^{j_0}, \ldots, F_{r}^{j_0}]\right)\leq t_{r}\left(\frac{(1-\gamma)n}{2k}\right)- \frac{(1-\gamma)^2 n^2}{8k^2r^2},$$
a contradiction to \eqref{eV1V2}.

Case 2. $V'_1$ contains a $k$-matching, say $\{u_1v_1, u_2v_2, \ldots, u_kv_k\}$. For $i\in [r]\setminus \{1\}$, by \eqref{dV3i} and \eqref{2|V_i|}
\begin{align}\label{EQ:uvjoin}
\left|\bigcap\limits_{j=1}^k (N_{V'_{r}}(u_j)\cap N_{V'_{r}}(v_j))\right|&\geq |V'_r|-2k\left((n-|V_1|)-\left((1-2\delta)\frac{r}{r+1}n - \eta n\right)\right)\notag\\
&\geq \frac{n}{r}-\left(\frac{(4k+2)r\varepsilon^{1/2}+4k\delta(r-1)}{r}n+2k\eta n\right)\notag\\
&\geq (1-k\gamma)\frac{n}{r}.
\end{align}
Note that $(1-k\gamma)\frac{n}{r}>1$. Choose $v\in \cap_{i=1}^k (N_{V'_{r}}(u_i)\cap N_{V'_{r}}(v_i))$. By \eqref{dV3i} and \eqref{2|V_i|up}, we have
$$d_{V'_j}(v)\geq d_{V(G')\setminus V'_{r}}(v)- \sum_{q\in[r-1]\backslash\{j\}} |V'_q|\geq (1-\theta)\frac{n}{r}$$
for $j\in[r-1]$. This together with \eqref{2|V_i|up} and \eqref{EQ:uvjoin} shows that for $i\in [r-1]\setminus \{1\}$
\begin{align*}
\left|N_{V'_i}(v)\cap \left(\bigcap\limits_{j=1}^k (N_{V'_{r}}(u_j)\cap N_{V'_{r}}(v_j))\right)\right| &\geq \left|\bigcap\limits_{j=1}^k (N_{V'_{r}}(u_j)\cap N_{V'_{r}}(v_j))\right|-(|V'_i|-d_{V'_i}(v))\\
&\geq (1-k\gamma)\frac{n}{r}-\left((1+2r^2\varepsilon^{1/2})\frac{n}{r}-(1-\theta)\frac{n}{r}\right)\\
&\geq(1-10k\theta)\frac{n}{r}.
\end{align*}
Pick $F_1^1\subseteq N_{V'_1}(v)\setminus \{u_1, v_1, u_2, v_2, \ldots, u_k, v_k\}$ with $|F_1^1|= (1-10k\theta)\frac{n}{2kr}$, and pick $F_i^1\subseteq N_{V'_i}(v)\cap (\bigcap\limits_{j=1}^k (N_{V'_{r}}(u_j)\cap N_{V'_{r}}(v_j)))$ with $|F_i^1|= (1-10k\theta)\frac{n}{2kr}$ for $i\in [r]\setminus \{1\}$. Then, pick
$$F_1^j\subseteq N_{V'_1}(v) \setminus \left(\bigcup\limits_{j'< j}F_1^{j'}\cup \{u_1, v_1, u_2, v_2, \ldots, u_k, v_k\}\right)$$
with $|F_1^j|= (1-10k\theta)\frac{n}{2kr}$, and $F_i^j\subseteq V'_i \setminus \cup_{j'< j}F_i^{j'}$ with $|F_i^j|= (1-10k\theta)\frac{n}{2kr}$ for each $i\in [r]\setminus \{1\}$ and each $j\in [k]\setminus \{1\}$. Let $Q_j=F_1^j\cup F_2^j \cup \ldots \cup F_{r-1}^j$ for each $j\in [k]$.
Since $G$ is $H$-free, there exists $j_0$ such that $G'_{Q_{j_0}}[F_1^{j_0}, F_2^{j_0}, \ldots, F_{r-1}^{j_0}]$ is $T_{r-1}(p^*_{j_0}(r-1))$-free. Thus, by Lemma \ref{t_k^*}, we have
$$e\left(G'_{Q_{j_0}}[F_1^{j_0}, F_2^{j_0}, \ldots, F_{r-1}^{j_0}]\right)\leq t_{r-1}\left(\frac{(1-10k\theta)(r-1)n}{2kr}\right)- \frac{(1-10k\theta)^2 n^2}{8k^2r^2}.$$
This is a contradiction to \eqref{eV1V2}.

Thus, $e(G'[V'_i])\leq f(k-1,k-1)$ for each $i\in[r]$. This implies that $G'$ can be formed from $r$-partite graph by deleting at most $rf(k-1, k-1)=o(n)$ edges, completing the proof of Lemma \ref{Ofn}.
\end{proof}

\begin{proof}[\bf{Proof of Theorem \ref{th1-stability}}]
We can take a partition $(V_1, V_2, \ldots, V_{r})$ of $V(G)$ which minimises the number of edges inside vertex classes. By Lemma \ref{Ofn}, there are $c'_0f(n)=O(f(n))$ edges within vertex classes and at most $c'_1f(n)=O(f(n))$ edges between vertex classes are not present in $G$.
\begin{claim}\label{CLAIM:D_(V)=o(F)}
For $i\in [r]$ and $v\in V_i$, $d_{V_i}(v)=O(f(n)^{1/2})$.
\end{claim}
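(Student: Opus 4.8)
The plan is to argue by contradiction. Fix $i\in[r]$, say $i=1$, take $v\in V_1$, and put $D:=d_{V_1}(v)$; suppose $D>Cf(n)^{1/2}$ for a constant $C=C(r,k,H)$ to be chosen large. Since the partition $(V_1,\ldots,V_r)$ minimises the number of edges inside the classes, moving $v$ out of $V_1$ into any other $V_t$ cannot decrease that number, so $d_{V_t}(v)\ge d_{V_1}(v)=D$ for every $t\in[r]$ (when $|V_1|=1$ we have $D=0$ and there is nothing to prove). Write $W_t:=N_{V_t}(v)$, so $|W_t|\ge D$ for all $t$. Recall from the start of this proof, that is, from Lemma~\ref{Ofn}, that for absolute constants $c'_0,c'_1$ the number of edges of $G$ inside any one class is at most $c'_0f(n)$ and the number of non-edges of $G$ between any two classes is at most $c'_1f(n)<(c'_1/C^2)D^2$.

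First I would pass to cleaner sets. For $s\ne t$, at most $c'_1f(n)^{1/2}$ vertices of $W_s$ have $f(n)^{1/2}$ or more non-neighbours in $W_t$, since otherwise there would be more than $c'_1f(n)$ non-edges between $V_s$ and $V_t$. Deleting all such vertices, over all $t\ne s$, leaves $W'_s\subseteq W_s$ with $|W'_s|\ge D-(r-1)c'_1f(n)^{1/2}\ge D/2$ (taking $C\ge 4rc'_1$), and now every vertex of $W'_s$ has fewer than $f(n)^{1/2}$ non-neighbours in each $W'_t$ with $t\ne s$.

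Next I would greedily build, for $j=1,\ldots,k$ in turn, $k$ \emph{gadgets} that are pairwise disjoint except for the common vertex $v$. Recall, as in the proof of Theorem~\ref{th1weaker}, that $uv_j$ is a critical edge of $H_j+u$ and that, by Theorem~\ref{Simonovits}, there is a constant $p_j^*$ such that $H_j+u$ embeds into $T_r(rp_j^*)+e$ for any edge $e$ inside a class, with the critical edge mapped onto $e$. The $j$-th gadget is a vertex $z_j\in W'_1$, a set $C_1^j=\{v,z_j\}\cup Z_j$ with $Z_j\subseteq W'_1$ and $|Z_j|=p_j^*-2$, and sets $C_t^j\subseteq W'_t$ with $|C_t^j|=p_j^*$ for $2\le t\le r$, all chosen disjoint from the earlier gadgets and so that $C_1^j\setminus\{v\},C_2^j,\ldots,C_r^j$ are pairwise completely joined in $G$. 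This is possible vertex by vertex: when a new vertex of some $W'_t$ is picked, it must avoid the vertices already used (fewer than $kr\max_j p_j^*$) and the non-neighbours of the at most $r\max_j p_j^*$ vertices already chosen for the current gadget (fewer than $r\max_j p_j^*\cdot f(n)^{1/2}$, by the cleaning step), a total that is less than $|W'_t|\ge D/2$ once $C$ is large enough in $r,k,\max_j p_j^*$ and $n$ is large. Since $C_2^j,\ldots,C_r^j\subseteq N(v)$ and $vz_j$ is an edge inside the class $C_1^j$, the graph $G[C_1^j\cup\cdots\cup C_r^j]$ contains $T_r(rp_j^*)+vz_j$; as the two ends of the added edge are swapped by an automorphism of $T_r(rp_j^*)+vz_j$, the Simonovits embedding places a copy of $H_j+u$ into $G[C_1^j\cup\cdots\cup C_r^j]$ with $u\mapsto v$, and in this copy $V(H_j)$ lies inside $\{z_j\}\cup C_2^j\cup\cdots\cup C_r^j\subseteq N(v)$.

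Running this over all $j$ yields $k$ vertex-disjoint copies of $H_1,\ldots,H_k$, each inside $N(v)$ and none containing $v$; joining $v$ to all of them gives a copy of $H=\{H_1,\ldots,H_k\}+v$ in $G$, a contradiction. Hence $d_{V_1}(v)\le Cf(n)^{1/2}$, and by symmetry the same bound holds in every class, so $d_{V_i}(v)=O(f(n)^{1/2})$. I expect the main obstacle to be the bookkeeping in the greedy step — keeping the candidate pools in the $W'_t$ nonempty while forcing every part to have the exact size $p_j^*$ demanded by the embedding; a secondary point to state carefully is that the shared centre $v$ genuinely plays the role of the endpoint $u$ of the critical edge in each $H_j+u$, which is why the endpoint-swapping automorphism of $T_r(rp_j^*)+vz_j$ must be invoked.
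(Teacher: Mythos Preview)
Your proof is correct but follows a genuinely different route from the paper's. The paper also starts from the observation that optimality of the partition forces $|N_{V_t}(v)|\ge D$ for every $t$, but then it splits each $N_{V_t}(v)$ into $k$ disjoint blocks $F_t^1,\ldots,F_t^k$ of size $2(c'_1 f(n))^{1/2}$ and argues that if the $r$-partite cross-graph $G[F_1^j,\ldots,F_r^j]$ contained a copy of $T_r(rp_j)$ for every $j$, then (since $\chi(H_j)=r$) each $H_j$ would embed inside $N(v)$, yielding $H$; hence for some $j_0$ that cross-graph is $T_r(rp_{j_0})$-free, and Lemma~\ref{t_k^*} then forces more than $c'_1 f(n)$ missing cross-edges, contradicting the bound on non-edges between classes. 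So the paper derives its contradiction from the edge count via Lemma~\ref{t_k^*}, while you derive it from $H$-freeness directly by a cleaning-plus-greedy construction. Your route is self-contained (it avoids Lemma~\ref{t_k^*}) but is longer and a bit more elaborate than needed: because $\chi(H_j)=r$, you do not actually need the in-class edge $vz_j$ or the embedding of $H_j+u$ via $T_r(rp_j^*)+e$; it already suffices to greedily build a constant-size complete $r$-partite graph entirely within the cleaned sets $W'_1,\ldots,W'_r\subseteq N(v)$ and embed $H_j$ there, which then joins to $v$ automatically. Also, your claim that $V(H_j)$ lies in $\{z_j\}\cup C_2^j\cup\cdots\cup C_r^j$ should read $(C_1^j\setminus\{v\})\cup C_2^j\cup\cdots\cup C_r^j$; the conclusion $V(H_j)\subseteq N(v)$ is unaffected since $Z_j\subseteq W'_1\subseteq N(v)$.
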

\begin{proof}
Suppose that there exists some $i_0$ such that $d_{V_i}(v)> c_0 f(n)^{1/2}$ for some $v\in V_i$. Without loss of generality, let $i_0=1$ and $v\in V_1$ such that $d_{V_1}(v)> 2k(c'_1 f(n))^{1/2}$. Then, $|N_{V_i}(v)|> 2k(c'_1 f(n))^{1/2}$
for each $i\in [r]$. Pick $F_i^1\subseteq N_{V_i}(v)$ and $F_i^j\subseteq N_{V_i}(v)\setminus \cup_{j'<j}F_i^{j'}$ with $|F_i^j|=2(c'_1 f(n))^{1/2}$ for each $i\in [r]$ and each $j\in [k]$. Let $Q_j=\cup_{i=1}^r F_i^j$. Since $G$ is $H$-free, there exists $j_0$ such that $G[F_1^{j_0}, F_2^{j_0}, \ldots, F_{r}^{j_0}]$ is $T_{r}(p_{j_0}r)$-free. Thus, by Lemma \ref{t_k^*}, we have
$$e\left(G[F_1^{j_0}, F_2^{j_0}, \ldots, F_{r}^{j_0}]\right)\leq t_{r}\left(2r(c'_1 f(n))^{1/2}\right)- 2rc_1'f(n).$$
implying that there are at least $2rc_1'f(n)$ edges missing between $V_1, V_2, \ldots, V_{r}$, a contradiction.
\end{proof}
By Claim \ref{CLAIM:D_(V)=o(F)} and the proof of Lemma \ref{Ofn}, $G$ can be made $r$-partite by deleting at most $O\left(f(n)^{\frac{1}{2}}\right)|L|+r f(k-1,k-1)=O\left(n^{-1}f(n)^{\frac{3}{2}}\right)$ edges. This completes the proof.
\end{proof}

\end{document}